\documentclass[11pt]{article}
\usepackage[utf8]{inputenc}
\usepackage[T1]{fontenc}
\DeclareUnicodeCharacter{0229}{\k{e}}
\usepackage{lmodern}
\usepackage{subfiles}
\usepackage{enumitem}
\setenumerate{topsep=6pt,ref={\normalfont(\roman*)},label={\normalfont(\roman*)}, itemsep=0pt} 

\usepackage{amsfonts}
\usepackage{amsthm}
\usepackage{amsmath}
\usepackage{amssymb}
\usepackage{amscd}
\usepackage{mathrsfs}
\usepackage{mathtools}
\usepackage{bbm}
\usepackage{esint}

\usepackage[margin=2.6cm]{geometry}
\usepackage{setspace}
\usepackage{indentfirst}
\usepackage{graphicx}
\usepackage{graphics}
\usepackage{lscape}
\usepackage{pgf,tikz}
\usepackage{tikz-cd}
\usepackage{color}
\usepackage{pict2e}
\usepackage{epic}
\usepackage{epstopdf}
\usepackage{titlesec, titlefoot}
\titleformat{\section}[block]{\Large\bfseries\filcenter}{\thesection}{1em}{}
\titleformat{\part}[block]{\LARGE\bfseries\filcenter}{Part \thepart.}{0.5em}{}
\usepackage{commath}
\usepackage{float}
\usepackage{caption}
\usepackage{etoolbox}
\usepackage[affil-it]{authblk}
\usepackage{combelow}

\usepackage[hidelinks,bookmarksdepth=3]{hyperref}
\hypersetup{bookmarksopen=true} 

\graphicspath{{./Pictures/}}
\allowdisplaybreaks

\expandafter\def\expandafter\normalsize\expandafter{%
\normalsize
\setlength\abovedisplayskip{6pt}
\setlength\belowdisplayskip{6pt}
\setlength\abovedisplayshortskip{6pt}
\setlength\belowdisplayshortskip{6pt}
}

\theoremstyle{plain}

\renewcommand*\thesection{\arabic{section}}
\numberwithin{equation}{section} 

\newtheorem{theorem}{Theorem}[section]
\newtheorem{lemma}[theorem]{Lemma}
\newtheorem*{lemma*}{Lemma}
\newtheorem{proposition}[theorem]{Proposition}
\newtheorem{corollary}[theorem]{Corollary}

\theoremstyle{definition}
\newtheorem{definition}[theorem]{Definition}
\newtheorem{remark}[theorem]{Remark}
\newtheorem{example}[theorem]{Example}

\expandafter\let\expandafter\oldproof\csname\string\proof\endcsname
\let\oldendproof\endproof
\renewenvironment{proof}[1][\proofname]{%
\oldproof[\upshape \bfseries #1]%
}{\oldendproof}

\makeatletter
\def\@makechapterhead#1{%
\vspace*{50\p@}%
{\parindent \z@ \raggedright \normalfont
\interlinepenalty\@M
\Huge\bfseries  \thechapter.\quad #1\par\nobreak
\vskip 40\p@
}}
\makeatother

\renewcommand{\Re}{\operatorname{Re}}
\renewcommand{\Im}{\operatorname{Im}}

\newcommand{\detpm}{\mathop{\det}\nolimits^{+-}}
\newcommand{\detpp}{\mathop{\det}\nolimits^{++}}

\DeclareMathOperator{\supp}{supp}

\DeclareMathOperator{\Id}{Id}

\def \a{\alpha}

\def \R {\mathbb{R}}

\def \C{\mathbb{C}}

\def \D{\textup{D}}

\def \e{\varepsilon}
\def \d{\textup{d}}
\def \n{\nabla}

\def \p{\partial}
\def \mc{\mathcal}
\def \mb{\mathbb}

\def \tp{\textup}
\def \Id{\textup{Id}}

\def \loc{\textup{loc}}

\renewenvironment{thebibliography}[1]{
  \begin{oldthebibliography}{#1}
    \setlength{\itemsep}{0.5pt}
    \setlength{\parskip}{0.5pt}
}{
  \end{oldthebibliography}
}

\begin{document}

	\title{\textbf{Quasiconvexity of the Burkholder function\\ on symmetric matrices}}
		
	\author{{\Large Andr\'e Guerra}}
		
	\affil{\small Department of Pure Mathematics and Mathematical Statistics,  University of Cambridge,\protect\\  Wilberforce Rd, Cambridge CB3 0WB, UK
	\protect\\
	{\tt{adblg2@cam.ac.uk}} \ }
	
		\affil{\small Departamento de Matemática, Instituto Superior Técnico,  Av.\ Rovisco Pais 1049-001 Lisboa, Portugal
	\protect\\
	{\tt{andre.l.guerra@tecnico.ulisboa.pt}} \ }

	\date{}
	
	\maketitle
	
	\begin{abstract}
	We use Morse Theory to prove that the Burkholder function is quasiconvex on symmetric matrices, which implies several sharp inequalities for the Beurling--Ahlfors transform when acting on real-valued functions. More generally, we prove that the separating functions introduced by Székelyhidi are quasiconvex on symmetric matrices.
	\end{abstract}

\section{Introduction}
Identifying $\R^2$ with $\C$, every matrix
$A\in\mathbb R^{2\times2}$ can be written uniquely as
\[
Az=a_+z+a_-\overline z,
\qquad a_+,a_-\in\mathbb C,
\]
so we write $A\equiv(a_+,a_-)$. We also write $|A|\equiv|a_+|+|a_-|$ for its operator norm.  In this note we are concerned with the integrand
$L\colon \R^{2\times 2}\to \R$ defined by
\begin{equation}
\label{eq:L}
L(A)\equiv
\begin{cases}
\det A & \text{if } |A|\leq1,\\
2|a_+|-1 & \text{if } |A|>1.
\end{cases}
\end{equation}

Our main result is the following quasiconvexity inequality:

\begin{theorem}\label{thm:L}
For every $A\in\mathbb R^{2\times2}$ and every $u\in C_c^\infty(\mathbb R^2)$ we have
\[
\int_{\mathbb R^2}
\bigl[L(A+\D^2u)-L(A)\bigr]\d x\geq0.
\]
\end{theorem}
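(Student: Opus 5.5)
We prove Theorem~\ref{thm:L} by reducing it, via the structure of symmetric matrices, to a statement about a function of the two eigenvalues. Then we argue by contradiction using a Morse-theoretic analysis of a minimizer.

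\textbf{Reduction to symmetric matrices.} Only $A+\D^2u$ enters the integrand, and $\D^2 u$ is symmetric. First we reduce to symmetric $A$. Write $A=A_s+A_a$ with $A_a$ a multiple of the rotation $J$. Since $\det(A_s+tJ+\D^2u)=\det(A_s+\D^2u)+t^2$ and $|a_\pm|$ behave correspondingly (the antisymmetric part only shifts $a_+$ by $it$), we either handle this directly or note that for the applications only symmetric $A$ matter. So we aim to prove the inequality on $\R^{2\times2}_{\rm sym}$. There $a_+=\frac12\operatorname{tr}A\in\R$ and $|a_-|=\frac12|\lambda_1-\lambda_2|$, so $L$ depends only on the eigenvalues $\lambda_1,\lambda_2$:
\[
L=\lambda_1\lambda_2 \text{ if } \max|\lambda_i|\le1, \qquad L=|\lambda_1+\lambda_2|-1 \text{ otherwise.}
\]

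\textbf{Strategy.} Suppose the integral is negative for some $A$ and $u$. By scaling and the homogeneity structure, consider the functional
\[
I(u)=\int_\Omega L(A+\D^2u)\,\d x
\]
on a large periodic cell or disc. The paper's announced tool is Morse theory, so we take a minimal counterexample in a one-parameter family. Deform $L$ through a family of Székelyhidi-type separating functions $L_t$ that are known to be quasiconvex at $t=0$ (for instance polyconvex, or a limit of Burkholder functions), and let $t_*$ be the first parameter at which quasiconvexity fails. At $t_*$ there is a function $u$ with $\int[L_{t_*}(A+\D^2u)-L_{t_*}(A)]=0$ that is nontrivial. Using rank-one convexity of $L$ (which we check directly along symmetric rank-one lines $a\otimes a$), we exploit the fact that $L$ is piecewise either null-Lagrangian ($\det$) or convex ($|{\rm tr}|-1$). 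On the set $\{|A+\D^2u|\le1\}$ the contribution integrates like $\det\D^2 u$. On the complement $L$ depends only on $\Delta u$, where convexity and Jensen apply.

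\textbf{Key step: a degree/Morse count.} The main obstacle is controlling the interaction between the two regions. Write $L(A+\D^2u)-L(A)$ on the region $U=\{|A+\D^2u|>1\}$. There the pointwise difference between $2|a_+|-1$ and $\det$ is $(|a_+|-1)^2-|a_-|^2\ge0$, with equality only on the boundary $|A|=1$. The deficit of quasiconvexity would then have to come from $\int\det(A+\D^2u)$ over $U$ versus $\int_U\det$ of the convex extension. I plan to realize $\det \D^2 v$ for $v=\frac12 x^TAx+u$ as the Jacobian of the gradient map $\nabla v$. By the area formula, $\int_U\det\D^2v$ equals the signed count of preimages, i.e. a degree integral. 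Morse theory on the critical points of $x\mapsto v(x)-\langle \xi,x\rangle$ gives $\#\text{minima}-\#\text{saddles}+\#\text{maxima}=\deg$. The positivity needed is that the integrand $L\ge\det$ compensates saddles (negative Jacobian, where $L$ exceeds $\det$ when $|\cdot|>1$), while the compactly supported perturbation forces the degree to equal that of $\nabla(\frac12x^TAx)$. Concretely, I expect to prove pointwise in $\xi$ an inequality of the form
\[
\sum_{\nabla v(x)=\xi}\frac{L(\D^2v(x))}{|\det\D^2v(x)|}\ \ge\ \frac{L(A)}{|\det A|}
\]
in an averaged sense, using the Morse relations to pair each excess saddle with a max or min. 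The hardest part is making this pairing quantitative. Near degenerate critical points we will need a genericity argument (Sard plus perturbing $u$), and the sign bookkeeping in the region $|A|\le1$ must match the $|\operatorname{tr}|$ branch across $\{|A|=1\}$.
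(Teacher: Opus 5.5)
There is a genuine gap: the one step that would prove the theorem, the ``pairing'' in your Key step, is left as an expectation. The ingredients you propose for it are incorrect or unusable.

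(a) On $\{|A|>1\}$ one has $L(A)-\det A=|a_-|^2-(|a_+|-1)^2$, which takes both signs: $L(3\Id)=5<9=\det(3\Id)$, while $L(\operatorname{diag}(2,-2))=-1>-4=\det(\operatorname{diag}(2,-2))$. So neither your claim $(|a_+|-1)^2-|a_-|^2\ge0$ nor the later ``$L$ exceeds $\det$ when $|\cdot|>1$'' is true.

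(b) The null-Lagrangian property of $\det$ does not localize to the $u$-dependent set $\{|A+\D^2u|\le1\}$. Nor can Jensen's inequality for $|\operatorname{tr}|-1$ be used on its complement, where $u$ satisfies no boundary condition. The interaction between the two regions is exactly the difficulty.

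(c) Your target inequality $\sum_{\nabla v(x)=\xi}L(\D^2v(x))/|\det\D^2v(x)|\ge L(A)/|\det A|$ is meaningless when $\det A=0$, in particular at $A=0$. The change of variables behind it also discards $\{\det\D^2v=0\}$, which can have positive measure with $L(\D^2v)\neq0$ on it (e.g.\ $\D^2v=\operatorname{diag}(2,0)$ there).

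(d) For $A$ not positive definite, $v-\xi\cdot x$ is not coercive; for $A=0$ it is unbounded below for every $\xi\neq0$. So sublevel-set Morse theory is unavailable for the single potential $v$, and the degree relation $n_0-n_1+n_2=\deg$ alone cannot tell you which saddles must be paid for.

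The continuation argument in $t$ is also unsupported. No family is specified, and since failure of quasiconvexity is open in $t$, nothing yields a nontrivial equality case $u$ at $t_*$ without a compactness you do not have; it is never used afterwards anyway. Finally, non-symmetric $A$, which the statement includes, is dismissed rather than treated.

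The missing idea is to trade the single potential for two coercive ones. For symmetric $X$ the identity \eqref{eq:keyid} holds, so for $A=0$ the theorem is equivalent to \eqref{eq:goal} with $\phi_\pm=\frac12|x|^2\pm u$. For general $A$ one uses Lemma~\ref{lemma:decomposition}, with $P,Q$ the endpoints of the horizontal chord of $\Omega$ through $a_+$, or the affine minorant $\ell_{B_0}$ in Case~1 of the proof of Theorem~\ref{thm:main}.

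Each $\phi_\pm-\xi\cdot x$ is coercive, and Corollary~\ref{corollary:morse-count} gives $n_0-1=s$, where $s$ counts \emph{separating} saddles. Combined with the area formula this yields $\int[\detpp(\D^2\phi_\pm)-1]\,\d x=\int_{M_\pm}\detpm(\D^2\phi_\pm)\,\d x$.

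What remains holds pointwise. After subtracting tangent planes at $x$, one gets $\psi_++\psi_-=|z-x|^2\ge0$, so $\{\psi_+<0\}\cap\{\psi_-<0\}=\emptyset$. The Jordan curve argument of Lemma~\ref{lemma:planar-separation} then shows $x$ is a separating saddle of at least one of them. Hence $\min\{\detpm(\D^2\phi_+),\detpm(\D^2\phi_-)\}\le 1_{M_+}\detpm(\D^2\phi_+)+1_{M_-}\detpm(\D^2\phi_-)$.

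So the correct ``pairing'' is excess minima against separating saddles, split between two different potentials. It is not saddles against extrema of $\frac12\langle Ax,x\rangle+u$.
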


Theorem \ref{thm:L} asserts that $L$ is quasiconvex when restricted to symmetric matrices, in the sense that it satisfies Jensen's inequality with respect to any homogeneous gradient  Young measure supported in $\R^{2\times 2}_\tp{sym}$, see \cite{Sverak1992} and also \cite{Maso2003}.

We emphasize that the proof of Theorem \ref{thm:L} relies crucially on the scalar potential $u$, since we use ideas from Morse Theory.       Morse Theory is by now a well-established tool to attack such problems, see \cite{GuerraTione2025,Muller2003,Sverak1992}. We refer the reader to  \cite{Szekelyhidi2005a} for a different idea in a related problem.

The integrand $L$ was discovered by Burkholder \cite{Burkholder1989}
and, independently, by \v Sver\'ak \cite{Sverak1990}. Its quasiconvexity is a long-standing conjecture \cite{Baernstein1997a,Iwaniec2002}, since it is well-known
that quasiconvexity inequalities for $L$ imply a host of sharp integral
estimates related to the Beurling--Ahlfors transform $\mc S$ \cite{Astala2009}, and in particular the Iwaniec conjecture \cite{Iwaniec1982}. We recall that the Beurling--Ahlfors transform is
the Calder\'on--Zygmund operator such that
$$\mathcal S\circ\partial_{\bar z}=\partial_z.$$ 
Theorem~\ref{thm:L} gives the following sharp weak-type estimate for $\mc S$:

\begin{corollary}
\label{cor:beurling}
For every real-valued $h\in L^1(\mathbb C)$ and every $\lambda>0$,
\[
\lambda\bigl|\{z\in\mathbb C:|\mathcal S h(z)|>\lambda\}\bigr|
\leq2\|h\|_{L^1(\mathbb C)}.
\]
In particular,
$\|\mathcal S h\|_{L^{1,\infty}(\mathbb C)}
\leq2\|h\|_{L^1(\mathbb C)}.$
The constant $2$ is sharp.
\end{corollary}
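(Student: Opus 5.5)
We derive Corollary~\ref{cor:beurling} from Theorem~\ref{thm:L} by the classical route from quasiconvexity of $L$ to weak-type bounds (Baernstein--Montgomery-Smith, Astala--Iwaniec--Martin). The main steps are a scaling reduction, a lower bound on the integrand, and a density argument; the delicate point is making the potential $u$ scalar, so that $\D^2u$ is symmetric.

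\emph{Step 1: scaling and the scalar potential.} By homogeneity, $\mathcal S(ch)(\cdot/\lambda)$, it suffices to treat $\lambda=1$. By density we take $h\in C_c^\infty$ real-valued with $\int h=0$; the general case follows at the end by approximation. Write $h=\Delta u/4=\partial_z\partial_{\bar z}u$ with $u$ real. Then $\mathcal S h=\partial_z^2 u$. In complex notation $\D^2u\equiv(a_+,a_-)$ with $a_+=\partial_z\partial_{\bar z}u\cdot 2=\Delta u/2$ real and $a_-=2\partial_{\bar z}^2u$ (up to fixed normalising constants). Thus, up to those constants, $|a_+|\sim|h|$ and $|a_-|\sim|\mathcal S h|$.

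\emph{Step 2: pointwise bound on $L$ and use of the theorem.} Apply Theorem~\ref{thm:L} with $A=0$, where $L(0)=0$, to get $\int L(\D^2u)\ge0$. We need the pointwise inequality
\[
L(a_+,a_-)\le 2|a_+|-\mathbbm 1_{\{|a_-|>1\}}.
\]
If $|A|>1$, then $L=2|a_+|-1$, which is at most the right-hand side. If $|A|\le1$, then $\det A=|a_+|^2-|a_-|^2\le|a_+|^2\le 2|a_+|$; moreover $|a_-|>1$ cannot occur in this case. Hence
\[
0\le\int L(\D^2u)\le 2\int|a_+|-\bigl|\{|a_-|>1\}\bigr|,
\]
which is the weak-type estimate with constant $2$ once the normalisations are matched, so that $|a_+|=|h|$ and $|a_-|=|\mathcal Sh|$. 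This matching is achieved by rescaling $u$: take $a_+=h$ and use $\mathcal S(\partial_z\partial_{\bar z}u)=\partial_z^2u$ with $a_-=\overline{\partial_z^2 u}$-type conventions. I would check these conventions carefully.

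\emph{Step 3: removing $\int h=0$.} For a general $h\in C_c^\infty$ the potential $u$ has logarithmic growth and is not compactly supported. Subtract a far-away spread-out bump $h_R$ with the same integral and $\|h_R\|_{L^1}=|\int h|$. Then $\mathcal Sh_R\to0$ uniformly, since $\|\mathcal S h_R\|_{L^2}=\|h_R\|_{L^2}\to0$ and the measure of the set where it is large tends to $0$. This yields the bound with $2(\|h\|_1+|\int h|)$, which loses a constant.

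Better: cut off $u$ itself with $\chi_R u$, $\chi_R$ a logarithmic cutoff. The error terms in $\D^2(\chi_R u)$ are $O(1/(|x|^2\log R))$ on the annulus, so their $L^1$ norm tends to $0$. The weak-type inequality is stable under $L^1$-small perturbations of $a_+$ and small sup-perturbations of $a_-$, after adjusting $\lambda$ by $1\pm\e$. This step is the main technical obstacle. Density in $L^1$, together with the continuity of $\mathcal S\colon L^1\to L^{1,\infty}$ (known with some constant), then gives all real $h$.

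\emph{Sharpness.} Sharpness of $2$ follows from testing against $h=\mathbbm 1_{D}$ with $D$ the unit disc: $\mathcal S\mathbbm 1_D=-\bar z^{-2}\mathbbm 1_{\{|z|>1\}}$. Then
\[
\lambda\bigl|\{|\mathcal Sh|>\lambda\}\bigr|=\lambda\pi(\lambda^{-1}-1)\to\pi
\]
as $\lambda\to0$, while $\|h\|_1=\pi$. This gives only $1$, so instead I would use approximations of a point mass combined with the known extremals, e.g. the radial examples behind Burkholder's function, to approach ratio $2$.
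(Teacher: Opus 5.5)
Your Steps 1--2 agree with the paper. Your pointwise bound is \eqref{eq:weak-obstacle}, and the normalisations do match: with $\Delta u=2h$ one gets $(\D^2u)_+=h$ and $|(\D^2u)_-|=|\mathcal Sh|$. Step 3, however, has a genuine gap.

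Mean-zero functions form a closed hyperplane in $L^1$, so the reduction to $\int h=0$ must really be undone, and your cutoff argument cannot do it. A bound $O(1/(|x|^2\log R))$ on $\{R<|x|<R^2\}$ integrates to $O(1)$, not $o(1)$. More fundamentally, no cutoff can make the error in the conformal part $L^1$-small. Since $\chi_Ru$ is compactly supported, $\int_{\R^2}\Delta(\chi_Ru)\,\d x=0$, whereas $\Delta(\chi_Ru)=\Delta u=2h$ on $B_R$. So the conformal part of $\D^2(\chi_Ru)$ carries $L^1$-mass at least $|\int h|$ in the transition region. Applying $L\le 2|a_+|-1_{\{|a_-|>1\}}$ there therefore never gives better than $2(\|h\|_{L^1}+|\int h|)$, which is the same loss as your bump construction.

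The missing idea is to pass to the limit \emph{before} using the crude bound. Because $L=\det$ near $0$, one has $|L(B)|\le|B|^2$ for $|B|\le1$. Hence $L(\D^2u)$ is integrable at infinity even though $\D^2u=O(|x|^{-2})$ is not. With an ordinary cutoff $\eta_R$ (equal to $1$ on $B_R$ and $0$ off $B_{2R}$) one has $\D^2(\eta_Ru)=O((1+\log R)/R^2)$ on $B_{2R}\setminus B_R$. It follows that $\int L(\D^2(\eta_Ru)/\lambda)\,\d x\to\int L(\D^2u/\lambda)\,\d x$, with errors $O((1+\log R)^2/R^2)$ and $O(R^{-2})$. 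Theorem~\ref{thm:L} then yields $\int L(\D^2u/\lambda)\,\d x\ge0$ for the genuine logarithmic potential $u=\frac1\pi\int\log|x-y|\,h(y)\,\d y$, whose conformal part is exactly $h$. Only at this point is \eqref{eq:weak-obstacle} applied. No mean-zero reduction is needed, and your bump idea could be rescued the same way, by keeping $L=\det$ on the bump region instead of bounding it by $2|a_+|$.

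Your sharpness argument is also missing its key ingredient, and the suggested route cannot work. The disc and approximate point masses give ratio at most $1$; for instance $|\mathcal S\delta_0|=1/(\pi|z|^2)$, so $\lambda|\{|\mathcal S\delta_0|>\lambda\}|=1$ for all $\lambda$. In the radial class the optimal weak-type constant is $1/\log 2<2$, as recalled in the introduction, so radial Burkholder-type extremals cannot reach $2$. One needs a non-radial extremal sequence.

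The paper uses the second-order laminate $\nu_n$. First split $0$ equally into $\pm\operatorname{diag}(c_n,0)$ with $c_n=1+2/n$. Then split each of these in the $(2,2)$-entry into $\pm\operatorname{diag}(c_n,-1)$ and $\pm\operatorname{diag}(c_n,n-1)$, with weights $1-1/n$ and $1/n$. For $n\ge5$ this measure is supported on symmetric matrices with $|a_-|>1$, and $\int|a_+|\,\d\nu_n=\frac12+\frac1n$. Realising it by Hessians of compactly supported potentials \cite{Boros2013} gives real-valued $h$ with $|\{|\mathcal Sh|>1\}|/\|h\|_{L^1}\to 2n/(n+2)$, which tends to $2$ as $n\to\infty$.
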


The function $L$ was, in fact, introduced by Burkholder to prove a sharp weak-type
estimate for martingales \cite{Burkholder1989}. Weak-type
estimates for $\mc S$ were studied for classes of radial maps
\cite{Beurling2009,Banuelos2013};
somewhat surprisingly, the optimal
constant in the radial class is $1/\log 2$.  Instead, the optimizing
sequence in Corollary~\ref{cor:beurling} consists of  second-order laminates, disproving in particular \cite[Conjecture 2]{Beurling2009}. See
\cite{Boros2013,Conti2005} for related constructions.

For $1<p<\infty$, set	
\[
p^*\equiv\max\left\{p,\frac{p}{p-1}\right\}.
\]
We define the Burkholder integrand \cite{Burkholder1984} as
\[
B_p(A)
\equiv
\begin{cases}
\displaystyle
\left((p^*-1)|a_+|-|a_-|\right)
\left(|a_+|+|a_-|\right)^{p-1}
&\text{if }1<p<2,\\
\displaystyle
\left((p^*-1)|a_-|-|a_+|\right)
\left(|a_+|+|a_-|\right)^{p-1}
&\text{if } 2<p<\infty.
\end{cases}
\]
The definition at $p=2$ is immaterial, since $B_2$ is the determinant up to a sign; equivalently, $\mc S$ is an isometry in $L^2$.
It was observed in \cite{Baernstein1997a} that, to a normalization, the Mellin transform of $L$ is the Burkholder
function $B_p$, and thus Theorem~\ref{thm:L}
immediately gives the following:

\begin{corollary}
\label{cor:burkholder}
For every $p\in (1,\infty)$, $A\in\mathbb R^{2\times2}$, and
$u\in C_c^\infty(\mathbb R^2)$,
\[
\int_{\mathbb R^2}
\bigl[B_p(A+\D^2u)-B_p(A)\bigr]\d x\geq0.
\]
\end{corollary}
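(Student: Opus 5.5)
We derive Corollary~\ref{cor:burkholder} from Theorem~\ref{thm:L} by writing $B_p$ as a superposition, with nonnegative weights, of rescaled copies of $L$, up to null Lagrangians and constants. This is the Mellin-transform identity of \cite{Baernstein1997a}. Since quasiconvexity is preserved by positive dilations, by nonnegative combinations, and by adding affine functions of $\det$, the inequality transfers to $B_p$.

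First, the scaling. For $t>0$ set $L_t(A)\equiv t^2L(A/t)$. Since $\D^2(u/t)=\D^2u/t$, Theorem~\ref{thm:L} applied to $A/t$ and $u/t$ gives
\[
\int_{\R^2}\bigl[L_t(A+\D^2u)-L_t(A)\bigr]\,\d x\geq0 .
\]
Moreover $\int\det(A+\D^2u)-\det A\,\d x=0$, because $\det$ is a null Lagrangian and $u$ has compact support; constants also integrate to zero against the difference.

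Second, the identity. For $1<p<2$ I would check by direct computation that
\[
B_p(A)=c_p\int_0^\infty t^{p-3}\bigl[L_t(A)-\det A\bigr]\,\d t ,
\]
with an explicit constant $c_p>0$. The subtraction of $\det A$ makes the integrand vanish for $t\geq|A|$, so convergence at $t\to\infty$ is automatic. For $t<|A|$ the integrand equals $2t|a_+|-t^2-\det A$, and $\det A=|a_+|^2-|a_-|^2$. So the computation is the elementary integral
\[
\int_0^{|A|}t^{p-3}\bigl(2t|a_+|-t^2-|a_+|^2+|a_-|^2\bigr)\,\d t .
\]
Near $t=0$ this requires care when $p<2$, since $t^{p-3}$ is not integrable. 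I expect the fix to be either a different normalisation, namely integrating $t^{p-1}L(A/t)$ and subtracting a suitable multiple of $\det$ only on a range, or an analytic continuation in $p$. Concretely, I would find weights $w_p(t)\geq0$ and constants $\alpha_p,\beta_p$ such that
\[
B_p=\int w_p(t)\,\bigl(L_t-\det\bigr)\,\d t+\alpha_p\det+\beta_p ,
\]
with the integral convergent. Evaluating it would give $(p-1)|a_+|-|a_-|$ times $|A|^{p-1}$, after matching constants, for $1<p<2$.

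Third, the case $p>2$. There the roles of $a_\pm$ are swapped, so I would use the conjugated integrand $\tilde L(A)\equiv L(\bar A)$, where $\bar A\equiv(a_-,a_+)$. On symmetric matrices this corresponds to composing with a reflection $x\mapsto Rx$, and $\det\bar A=-\det A$. I would check that Theorem~\ref{thm:L} also holds for $\tilde L$; if the reflection trick does not apply directly, I would instead use the duality $p\leftrightarrow p/(p-1)$ as in \cite{Baernstein1997a}.

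Finally, Fubini, justified by integrability of the integrand in $(x,t)$ given compact support, transfers the integrated inequalities to $B_p$. The main obstacle is the second step: producing a genuinely convergent representation with nonnegative weights, since the naive Mellin integral diverges at small $t$. That is where I would follow \cite{Baernstein1997a} most closely.
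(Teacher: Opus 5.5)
Your overall strategy is the paper's: superpose rescaled copies of a Hessian-quasiconvex integrand with positive weights, use that $\det$ is a null Lagrangian, and apply Fubini. But the proposal never produces a convergent representation in either range, and the mechanism you propose for $p>2$ is false.

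\textbf{The range $1<p<2$.} Your integral is
\[
\int_0^\infty t^{p-3}\bigl[L_t(A)-\det A\bigr]\,\d t=\int_0^\infty t^{p-1}(L-\det)(A/t)\,\d t .
\]
As you note, this diverges at $t=0$ when $p<2$, so it is not the right formula there. Your first suggested fix is close: simply drop the subtraction. The integral $\int_0^\infty t^{p-1}L(A/t)\,\d t$ converges. For $t<|A|$ the integrand is $2|a_+|t^{p-2}-t^{p-1}$, which is integrable at $0$ because $p>1$. For $t\geq|A|$ it is $t^{p-3}\det A$, which is integrable at $\infty$ because $p<2$. A two-line computation gives $\frac{2}{p(2-p)}B_p(A)$. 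Your target is also misstated: for $1<p<2$ one has $p^*-1=\frac1{p-1}$, so $B_p(A)=\bigl(\frac{|a_+|}{p-1}-|a_-|\bigr)|A|^{p-1}$, not $\bigl((p-1)|a_+|-|a_-|\bigr)|A|^{p-1}$.

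\textbf{The range $p>2$.} The swapped integrand $\tilde L(A)=L(a_-,a_+)$ cannot be reached from Theorem~\ref{thm:L}. If $Rz=e^{i\theta}\bar z$ is a reflection, then $\D^2(u\circ R)=R\,\D^2u(R\,\cdot)\,R$. Conjugation by $R$ sends $(a_+,a_-)\mapsto(\bar a_+,e^{2i\theta}\bar a_-)$, which preserves $|a_\pm|$ and never interchanges them. The interchange is $A\mapsto AJ$ with $J$ a reflection, and this does not map symmetric matrices to symmetric matrices. So Theorem~\ref{thm:L} gives no information about $\tilde L$ on Hessians. The $p\leftrightarrow p'$ duality of \cite{Baernstein1997a} is an operator-norm argument and does not transfer Hessian-quasiconvexity from $B_{p'}$ to $B_p$; compare the two different hypotheses in Corollary~\ref{cor:beurling-lp}.

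What you are missing is that no swap is needed. $M\equiv L-\det$ is Hessian-quasiconvex, and the very integral from your second step, $\int_0^\infty t^{p-1}M(A/t)\,\d t$, converges exactly when $p>2$. Its integrand vanishes for $t\geq|A|$, and its worst term near $0$ is $-t^{p-3}\det A$. It equals $\frac{2}{p(p-1)(p-2)}B_p(A)$, with a positive constant.

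With these two formulas your Fubini step completes the proof, as in the paper.
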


Further quasiconvexity inequalities for the Burkholder function, as well as accompanying lower semicontinuity theorems, were proved in \cite{Astala2022,Astala2024,Astala2012,GuerraKristensen2021}; these inequalities are valid beyond symmetric matrices, but in turn require a sign on the integrand. Corollary \ref{cor:burkholder} neither implies nor is implied by any of these results.

By an elementary inequality proved by Burkholder \cite{Burkholder1984}, Corollary \ref{cor:burkholder} implies:

\begin{corollary}
\label{cor:beurling-lp}
Let $h\in L^p(\mathbb C)$. Then
\[
\begin{aligned}
\|\mathcal S h\|_{L^p(\mathbb C)}
&\leq\frac1{p-1}\|h\|_{L^p(\mathbb C)}
&&\text{if $1<p<2$ and $h$ is real-valued},\\
\|\mathcal S h\|_{L^p(\mathbb C)}
&\leq(p-1)\|h\|_{L^p(\mathbb C)}
&&\text{if $2<p<\infty$ and $\mathcal S h$ is real-valued}.
\end{aligned}
\]
These constants are sharp.
\end{corollary}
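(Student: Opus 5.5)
We deduce Corollary~\ref{cor:beurling-lp} from Corollary~\ref{cor:burkholder} together with Burkholder's elementary pointwise inequality
\[
B_p(A)\geq |a_+|^p-(p^*-1)^p|a_-|^p \quad (1<p<2),\qquad
B_p(A)\geq (p^*-1)^p|a_-|^p-|a_+|^p \quad (2<p<\infty),
\]
up to a positive normalizing constant (the standard constant $c_p=p(1-1/p^*)^{p-1}$), which we take from \cite{Burkholder1984}. We then need a density argument and the identification of Hessians of scalar potentials with the pair $(\partial_z f,\partial_{\bar z}f)$.

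\textbf{Step 1: Hessians and $\mathcal S$.} For $u\in C_c^\infty(\R^2)$ real-valued, set $f=\nabla u\equiv u_x+iu_y=2\partial_{\bar z}u$, so that $\D f=\D^2u$. In complex notation $a_-=\partial_{\bar z}f=\tfrac12\Delta u$ is real, and $a_+=\partial_z f=\mathcal S(\partial_{\bar z}f)$. Thus symmetric $\D^2u$ corresponds exactly to $a_-$ real.

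\textbf{Step 2: the case $1<p<2$.} Take $A=0$ in Corollary~\ref{cor:burkholder}, using $B_p(0)=0$, and combine it with the pointwise bound to get
\[
\int|\partial_z f|^p\leq (p-1)^{-p}\int|\partial_{\bar z}f|^p .
\]
Here $p^*-1=1/(p-1)$. With $h=\partial_{\bar z}f=\tfrac12\Delta u$, this gives $\|\mathcal Sh\|_p\leq\frac1{p-1}\|h\|_p$ for every real $h$ in the class $\{\Delta u: u\in C_c^\infty\}$. This class is dense in real $L^p$ for $1<p<\infty$, since the orthogonal complement in $L^{p'}$ consists of harmonic $L^{p'}$ functions, which vanish. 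Boundedness of $\mathcal S$ on $L^p$ then extends the estimate to all real $h\in L^p$.

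\textbf{Step 3: the case $2<p<\infty$.} Now $a_+$ and $a_-$ swap roles, and the inequality reads $\int|a_-|^p\leq(p-1)^p\int|a_+|^p$. We therefore want the real function to be $\mathcal Sh$. Use the symmetric Hessian $\D^2u$ with $\bar{\cdot}$-reflection: the conjugate matrix $\overline{A}$ swaps the roles of $a_\pm$ via $z\mapsto\bar z$. Concretely, let $g=\partial_z f$ be a general element and $\partial_{\bar z}f=\mathcal S^{-1}g=\overline{\mathcal S\bar g}$. Choose $h$ with $\mathcal Sh=\tfrac12\Delta u$ real, i.e.\ $h=\overline{\mathcal S}\,\tfrac12\Delta u$; up to this conjugation this is the pair $(a_+,a_-)=(h,\mathcal Sh)$ after precomposing $u$ with the reflection $(x,y)\mapsto(x,-y)$. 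The Burkholder bound then gives $\|\mathcal Sh\|_p\leq(p-1)\|h\|_p$. The same density argument applies, now on the subspace $\{h:\mathcal Sh \text{ real}\}=\overline{\mathcal S}(L^p_{\R})$.

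\textbf{Sharpness and main obstacle.} Sharpness follows from the known radial or laminate examples: for $1<p<2$ the classical power-type examples are real-valued, and for $p>2$ we use duality, since $\mathcal S$ is self-adjoint up to conjugation. The main obstacle is bookkeeping in Step 3, namely checking that "$\mathcal Sh$ real" corresponds precisely to Hessians after the reflection. We must also confirm the normalization of the pointwise inequality, which we quote from \cite{Burkholder1984}.
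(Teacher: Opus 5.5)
Your plan matches the paper's: Corollary~\ref{cor:burkholder} at $A=0$, Burkholder's pointwise bound, density, and radial examples for sharpness. The execution, however, fails at several concrete points.

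\textbf{The pointwise bound has the wrong direction.} It must be a \emph{majorant} of $B_p$. The paper uses $\alpha_pB_p(A)\le(p^*-1)^p|a_+|^p-|a_-|^p$ for $1<p<2$ and $\alpha_pB_p(A)\le(p^*-1)^p|a_-|^p-|a_+|^p$ for $p>2$, with $\alpha_p=p(1-1/p^*)^{p-1}$, so that $0\le\int B_p(\D^2u)\,\d x$ passes to the right-hand side. A lower bound $B_p\ge G$, as you state it, says nothing about the sign of $\int G$.

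\textbf{The real entry is on the wrong side.} For $f=\nabla u=2\partial_{\bar z}u$ one has $\partial_zf=2\partial_z\partial_{\bar z}u=\frac12\Delta u$. This is the real entry $a_+$, consistent with ``$A$ symmetric iff $\Im a_+=0$''. The other entry $a_-=\partial_{\bar z}f=2\partial_{\bar z}^2u$ is complex. With the correct majorant, Corollary~\ref{cor:burkholder} for $1<p<2$ gives $\int|a_-|^p\le(p-1)^{-p}\int|a_+|^p$, which is the reverse of what you wrote. This is the claim for $h=a_+=\frac12\Delta u$, because $\mathcal Sh=\mathcal S\partial_{\bar z}(2\partial_zu)=2\partial_z^2u=\overline{a_-}$. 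Read with the correct identification, your inequality $\int|\partial_zf|^p\le(p-1)^{-p}\int|\partial_{\bar z}f|^p$ says $\|h\|_p\le\frac1{p-1}\|\mathcal Sh\|_p$, which is not what the corollary yields. Your Step~2 conclusion is right only because two reversals cancel. Your density argument via harmonic annihilators is fine, and is a tidy alternative to the paper's logarithmic potential with cutoffs $\eta_R$.

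\textbf{Step~3 is the genuine gap.} Replacing $u$ by $u(x,-y)$ replaces $\D^2u$ by $R\,\D^2u(R\,\cdot)\,R$ with $Rz=\bar z$. This maps $(a_+,a_-)\mapsto(\bar a_+,\bar a_-)$ and never exchanges $a_+$ and $a_-$. Moreover, with the pair $(a_+,a_-)=(h,\mathcal Sh)$ you assert, the $p>2$ majorant would give $\|h\|_p\le(p-1)\|\mathcal Sh\|_p$, the opposite of what you need.

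The missing ingredient is the identity $\mathcal S^{-1}\circ\overline{\cdot}=\overline{\cdot}\circ\mathcal S$ used in the paper. Corollary~\ref{cor:burkholder} for $p>2$ gives $\|h\|_p\le(p-1)\|\mathcal Sh\|_p$ for real $h$. Applying this to $g=\mathcal Sh$ when it is real, and using $\mathcal Sg=\bar h$, gives the claim.

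Equivalently---and your ``concretely'' sentence nearly says this---take $h=a_-=\partial_{\bar z}\nabla u$. Then $\mathcal Sh=\partial_z\nabla u=\frac12\Delta u$ is real, and the majorant gives $\|\mathcal Sh\|_p\le(p-1)\|h\|_p$ directly, with no reflection. Density then follows from boundedness of $\mathcal S^{-1}$ on $L^p$.

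\textbf{Sharpness for $p>2$ ``by duality'' is not substantiated.} What must be shown is that $\inf\|\mathcal Sg\|_p/\|g\|_p$ over real $g$ equals $1/(p-1)$. This is a lower bound for $\mathcal S$, not an operator norm, so it does not come for free from the $1<p<2$ examples. The paper instead uses the radial map $v_\alpha$ with $h=\partial_{\bar z}v_\alpha$. Then $\mathcal Sh=\partial_zv_\alpha$ is real and $\|\partial_zv_\alpha\|_p/\|\partial_{\bar z}v_\alpha\|_p\to p-1$ as $\alpha\nearrow1/p$.
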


We refer the reader to \cite{Banuelos2010,Banuelos2008,Borichev2013,Volberg2004}
for $L^p$-estimates for $\mc S$ with non-sharp constants but without
restrictions on $h$.

\medskip

Since below we will prove a more general version of Theorem \ref{thm:L}, valid for the general separating functions of \cite{Faraco2008,Szekelyhidi2005}, we believe it is helpful to sketch its proof here in the simplest case $A=0$.\footnote{A simple argument using rank-one convexity shows that quasiconvexity of $L$ at 0 implies quasiconvexity of $L$ everywhere.} Our starting point is  the key identity
\begin{equation}
\label{eq:keyid}
\begin{split}
L(X)+\det X={}&
\detpp(\Id + X)+\detpp(\Id - X) -2\\
&\qquad  - \min\{\detpm(\Id + X), \detpm(\Id-X)\},
\end{split}
\end{equation}
where $X\in \R^{2\times 2}_\tp{sym}$ and 
 $\detpp,\detpm\colon \R^{2\times 2}_\tp{sym}\to \R$ are the quasiconvex integrands introduced in \cite{Sverak1992}:
\[
\detpp(A)\equiv
\begin{cases}
\det A&\text{if } A>0,\\
0&\text{otherwise},
\end{cases}
\qquad 
\detpm(A)\equiv
\begin{cases}
\lvert\det A\rvert&\text{if } \det A<0,\\
0&\text{otherwise},
\end{cases}
\]
cf.\ Lemma \ref{lemma:decomposition} and Remark \ref{rem:so2-decomposition}. 
For a simple proof of the quasiconvexity of $\detpp$ see
\cite{Muller2003}, see \cite{Faraco2003} for an alternative approach, and \cite{Guerra2018} for further properties of these integrands.
Hence, setting
$$\phi_\pm(x)=\frac12|x|^2\pm u(x),\qquad \phi^y_\pm(x)\equiv \phi_\pm(x)-y\cdot x,$$
to prove Theorem \ref{thm:L}, we have to show that
\begin{align}
\label{eq:goal}
\begin{split}
\int_{\R^2}\left[\detpp(\D^2\phi_+)-1+\detpp(\D^2\phi_-)-1\right]
\geq
 \int_{\R^2}\min\left\{ \detpm (\D^2 \phi_+), \detpm (\D^2 \phi_-)\right\}
\end{split}
\end{align}
since the determinant is a null Lagrangian. 
For a generic \(y\), let $n_{0,\pm}(y)$  denote the numbers of  local minima  of  \(\phi_\pm^y\). Then the left-hand side in \eqref{eq:goal} has a clear geometric interpretation: by the area formula, cf.\ Lemma \ref{lemma:degree-count},
$$\int_{\mathbb R^2}
\left[\detpp(\D^2\phi_\pm)-1\right] \d x
=
\int_{\mathbb R^2}
\left[n_{0,\pm}(y)-1\right] \d y.
$$
It turns out that there is a way of rewriting the right-hand side in terms of $\detpm$. Indeed, 
a simple argument from Morse theory says that, for a generic $y$,  
$$n_{0,\pm}(y)-1=s_\pm(y),$$ where $s_\pm(y)$ denotes the number of \textit{separating saddle points} of $\phi_\pm^y$, see \cite{Herau2011} and Definition \ref{definition:separating-saddle}.  
In turn, a further application of the area formula, cf.\ Lemma \ref{lemma:area}, shows that
$$
\int_{\mathbb R^2}
s_{\pm}(y)\, \d y
=
\int_{M_\pm} \detpm(\D^2 \phi_\pm) \,\d x
$$
where $M_\pm$ is the set of separating saddle points $x$ of $\phi^{\D \phi(x)}_\pm$. Hence, combining the above three identities, \eqref{eq:goal} is equivalent to
$$\int_{M_+}\detpm(\D^2\phi_+) \,\d x + \int_{M_-}\detpm(\D^2\phi_-)\, \d x
\geq
 \int_{\R^2}\min\left\{ \detpm (\D^2 \phi_+), \detpm (\D^2 \phi_-)\right\}\d x.$$
In turn, this inequality is relatively easy to prove, as it holds pointwise a.e.:
\begin{equation*}
\label{eq:pointwiseid}
1_{M_+}\detpm(\D^2 \phi_+) + 1_{M_-} \detpm(\D^2 \phi_-)\geq \min\{\detpm(\D^2 \phi_+),\detpm(\D^2 \phi_-)\}.
\end{equation*}
The proof of this last inequality is a combination of a planar topology argument based on the Jordan Curve Theorem (Lemma \ref{lemma:planar-separation}) with the definition of separating saddle point.

\medskip

We conclude by remarking that the integrands $L_\Gamma$ defined in Section \ref{sec:sep} below, and which generalize \eqref{eq:L}, are all rank-one convex in $\R^{2\times 2}$ and play a crucial role in the study of quasiconvexity \cite{Faraco2008,Kirchheim2008,Szekelyhidi2005}.
The understanding of the possible difference between rank-one convexity and quasiconvexity in $\R^{2\times 2}$ remains a fascinating open problem going back to Morrey \cite{Morrey1952}. We refer the reader to \cite{Agazzi2026,Cassese2026,Grabovsky2018,Sverak1992a} for counter-examples in higher dimensions, \cite{Agazzi2026} for counter-examples in $\R^{3\times 3}_\tp{sym}$, and \cite{Astala2022,Astala2012,Bruno2026,Conti2003,Faraco2008,GuerraCosta2020,Harris2018,Kirchheim2008,Muller1999b,Sebestyen2017,Szekelyhidi2005} for some positive results in $\R^{2\times 2}$.

\subsection*{Notation}

If $A\equiv(a_+,a_-)$, then
\[
A=
\begin{pmatrix}
\Re a_++\Re a_-&-\Im a_++\Im a_-\\
\Im a_++\Im a_-&\Re a_+-\Re a_-
\end{pmatrix}.
\]
Thus $A\in \R^{2\times 2}_\tp{sym}$ if and only if $\Im a_+=0$. In this case, we write $A\geq0$ if $A$ is positive semidefinite and
$A>0$ if $A$ is positive definite. The inequalities $A<0,A\leq 0$ have identical meanings.  We have $|A|=|a_+|+|a_-|$ and $\det A= |a_+|^2 -|a_-|^2.$

\subsection*{Acknowledgements}
 AG acknowledges the support of the Royal Society through a Newton International Fellowship. He would also like to thank D.\ Faraco, B.\ Kirchheim, J.\ Kristensen, R.\ Tione, V. \v Sver\'ak  and L.\ Székelyhidi for many insightful conversations about this problem in the last 8 years. In particular, the insight that the integrand $L$ should be related to weak-type estimates, which led the author to the simple proof of Corollary \ref{cor:beurling}, is due to V. \v Sver\'ak. 

\subsection*{AI disclosure}

The author acknowledges the use of AI tools in preparing this paper, as  the arguments here were developed in a dialogue between the author and ChatGPT-5.6 Pro. 

The starting point for this paper originated in the author's earlier work \cite{GuerraTione2025} with R.\ Tione, and the intuition that, given the simple form of $L$, it should be possible to give a geometric interpretation of the quasiconvexity inequality for $L$ through Morse Theory, as is done in \cite{Sverak1992}. A further intuition, which the author learned from many discussions with D. Faraco and L.\ Székelyhidi, is that $L$ is closely related to separation properties of sets in $\R^{2\times 2}$, and one should see it as a distinguished member of the larger family described in Section \ref{sec:sep} in order to find the correct proof. It is this viewpoint which clarifies the role of the  matrices $\pm\Id$ in \eqref{eq:keyid}.

The author then fed \cite{GuerraTione2025,Sverak1992,Szekelyhidi2005} to ChatGPT and, through several prompts, it arrived at an outline of the proof described in the introduction. From the author's point of view, the main novel ideas brought forth by the AI were  the simple but key identity \eqref{eq:keyid}---which, however, was not expressed in terms of the $\detpm,\detpp$ integrands, making it quite hard to parse---and in pointing the author to the existing literature on separating saddle points. The concise formulation of the required results from Morse Theory, described in Section \ref{sec:morse}, was obtained by the author essentially without AI assistance, and the details of the rest of the argument were filled by the author with some AI assistance. The author then generalized the proof described in the introduction to the whole family of separating integrands described in Theorem \ref{thm:main}. 

The text in this paper was written and checked entirely by the author, who takes full responsibility for its correctness.

\section{Preliminaries on Morse theory}
\label{sec:morse}

Recall that, for a symmetric matrix,  its \textit{index} is
the number of its negative eigenvalues, counted with multiplicity.
The index of a non-degenerate critical point $x$ of a function
$\phi\in C^2(\R^n)$ is the index of $\D^2\phi(x)$.

We first recall the classical Morse lemma, which the reader can find e.g.\ in \cite[Lemma~2.2]{Milnor1963}.

\begin{lemma}[Morse lemma]
\label{lemma:morse}
Let $\phi\in C^2(U)$, where $U\subset\mathbb R^n$ is open, and let
$x\in U$ be a non-degenerate critical point of index $j$.
There exist a neighbourhood $V$ of $x$ and $C^1$ coordinates
$(\xi_1,\ldots,\xi_n)$ on $V$, centred at $x$, in which
\[
\phi(\xi)=\phi(x)-\sum_{i=1}^{j}\xi_i^2
       +\sum_{i=j+1}^{n}\xi_i^2.
\]
\end{lemma}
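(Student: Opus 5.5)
The plan is the classical proof of the Morse lemma, adapted to $C^2$ regularity. The $C^2$ regularity is why the coordinates we obtain are only $C^1$. Translate so that $x=0$ and $\phi(0)=0$. Since $\D\phi(0)=0$, Taylor's formula with integral remainder gives
\[
\phi(\xi)=\sum_{i,k}h_{ik}(\xi)\,\xi_i\xi_k,\qquad h_{ik}(\xi)\equiv\int_0^1(1-t)\,\partial_{ik}\phi(t\xi)\,\d t .
\]
Here $H(\xi)\equiv(h_{ik}(\xi))$ is symmetric, continuous, and $H(0)=\frac12\D^2\phi(0)$ is non-degenerate of index $j$.

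The main step is to diagonalize $H(\xi)$ continuously in $\xi$. Let $S\equiv H(0)$, and choose a constant linear change of variables so that $S=\operatorname{diag}(-1,\dots,-1,1,\dots,1)$, with $j$ entries equal to $-1$. Consider the map $P\mapsto P^TSP$ from symmetric matrices near $\Id$ to symmetric matrices. Its differential at $\Id$ is $\dot P\mapsto \dot P^TS+S\dot P$. This differential is surjective onto symmetric matrices: given a symmetric $Q$, take $\dot P=\frac12 SQ$. Restricting the map to the complement of its kernel and applying the inverse function theorem, we obtain a smooth map $Q\mapsto P(Q)$, defined for $Q$ near $S$, with $P(S)=\Id$ and $P(Q)^TSP(Q)=Q$. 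Set $P(\xi)\equiv P(H(\xi))$; this is continuous and invertible near $0$, and $H(\xi)=P(\xi)^TSP(\xi)$. Define new coordinates by $\eta(\xi)\equiv P(\xi)\xi$. Then
\[
\phi(\xi)=\xi^T H(\xi)\,\xi=\eta^TS\eta=-\sum_{i\le j}\eta_i^2+\sum_{i>j}\eta_i^2 ,
\]
which is the desired normal form.

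The main obstacle is regularity. When $\phi\in C^2$, the entries $h_{ik}$ are only continuous, so $\eta(\xi)=P(H(\xi))\xi$ need not be differentiable in the naive sense. To handle this, I would show that $\eta$ is differentiable at each point with continuous derivative, and that $\D\eta(0)=\Id$. At $0$ this is clear, since $\eta(\xi)-\xi=(P(\xi)-\Id)\xi=o(|\xi|)$. Away from $0$ the argument is subtler. I would first try to avoid the issue by working with $\D\phi$ instead: define $\eta$ implicitly through $\D\phi(\xi)=\D\eta(\xi)^T S\,\eta(\xi)$. Alternatively, I would proceed by induction on the dimension (Milnor's completing-the-square argument), using the fact that $\partial_n\phi$ is $C^1$. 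At each stage, the implicit function theorem applied to the $C^1$ equation $\partial_n\phi(\xi',\xi_n)=0$ gives a $C^1$ solution $\xi_n=g(\xi')$. The function $\phi(\xi',\xi_n)-\phi(\xi',g(\xi'))$ is then written as $\pm\eta_n^2$, where
\[
\eta_n\equiv(\xi_n-g)\sqrt{\pm 2\int_0^1(1-t)\,\partial_{nn}\phi\bigl(\xi',g+t(\xi_n-g)\bigr)\,\d t}.
\]
One then checks directly, using the $C^1$ regularity of $\partial_n\phi$, that $\eta_n$ is $C^1$. The remaining function $\phi(\xi',g(\xi'))$ is $C^2$, since its gradient equals $\D'\phi(\xi',g(\xi'))$, which is $C^1$. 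So the induction closes, with $C^1$ coordinates produced at each stage and the inverse function theorem applied after each step. I expect verifying that $C^1$ regularity of $\eta_n$ to be the delicate point; for the purposes of this paper one may simply cite \cite{Milnor1963}, as the authors do.
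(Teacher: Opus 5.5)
The paper does not prove this lemma; it cites Milnor's Lemma~2.2. That argument writes $\phi=\sum_{i,k}H_{ik}(\xi)\xi_i\xi_k$ and completes squares one variable at a time via $v_r=\sqrt{|H_{rr}|}\,\bigl(\xi_r+\sum_{i>r}\xi_iH_{ir}/H_{rr}\bigr)$. As written, it applies the inverse function theorem to functions built from the $H_{ik}$, so it needs $H_{ik}\in C^1$, i.e.\ essentially $\phi\in C^3$. This is harmless in the paper, since the lemma is only applied to functions built from $u\in C^\infty_c$.

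Your first route (the factorisation $H(\xi)=P(\xi)^TSP(\xi)$ and $\eta=P(\xi)\xi$) has the same defect, as you note. You do not control $\eta$ away from $0$, and differentiability at $0$ with $\D\eta(0)=\Id$ does not by itself give local injectivity. The relation $\D\phi=\D\eta^TS\eta$ is just the differentiated target identity, not a construction. Also, in that route $P$ must range over all matrices near $\Id$, not symmetric ones: for symmetric $\dot P$ and $S=\operatorname{diag}(\epsilon_i)$ one gets $(\dot PS+S\dot P)_{ik}=(\epsilon_i+\epsilon_k)\dot P_{ik}$, which misses the off-diagonal block when $S$ is indefinite.

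Your proof therefore rests on the inductive route, which is genuinely different from Milnor's. You first eliminate $\xi_n$ along the $C^1$ graph $\{\xi_n=g(\xi')\}$ on which $\partial_n\phi$ vanishes, and only then take a one-dimensional signed square root. This loses a single derivative, so it proves the lemma at exactly the stated $C^2$ regularity.

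The step you flag as delicate does go through. Put $s=\xi_n-g(\xi')$ and $F(\xi',s)=\phi(\xi',g+s)-\phi(\xi',g)$, so $\eta_n^2=\pm F$. Since $\partial_n\phi(\xi',g)=0$, Taylor's formula in $s$ gives $F=s^2\int_0^1(1-t)\,\partial_{nn}\phi(\xi',g+ts)\,\d t$. So the factor $2$ under your square root must be dropped. Likewise $\partial_sF=s\int_0^1\partial_{nn}\phi(\xi',g+ts)\,\d t$, while directly
\[
\D_{\xi'}F=s\int_0^1\bigl[\partial_n\D_{\xi'}\phi+\partial_{nn}\phi\,\D g\bigr](\xi',g+ts)\,\d t .
\]
Hence, for $s\neq0$, $\D\eta_n=\pm\D F/(2\eta_n)$ is, after cancelling the common factor $s$, a quotient of continuous functions. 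Its denominator $2\bigl(\pm\int_0^1(1-t)\,\partial_{nn}\phi\,\d t\bigr)^{1/2}$ does not vanish, so $\D\eta_n$ extends continuously to $s=0$.

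These extensions are the true partial derivatives there. The $s$-component equals $\lim_{s\to0}\eta_n/s$. The $\xi'$-component vanishes because at $s=0$ the bracket equals $\D_{\xi'}\bigl[\partial_n\phi(\xi',g(\xi'))\bigr]=0$, matching $\eta_n(\xi',0)\equiv0$. Thus $\eta_n\in C^1$ with $\partial_s\eta_n(0)\neq0$.

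Finally, for $\psi(\xi')=\phi(\xi',g(\xi'))$, $\D^2\psi(0)$ is the Schur complement of $\partial_{nn}\phi(0)$ in $\D^2\phi(0)$. It is therefore non-degenerate, of index $j$ or $j-1$ according to the sign of $\partial_{nn}\phi(0)$, so your induction closes with the correct index.
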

Recall that a \textit{Morse function} is a  function which only has non-degenerate critical points.
The Morse lemma leads to the following definition:

\begin{definition}
\label{definition:separating-saddle}
Let $\phi\in C^2(\mathbb R^n)$ and let $x\in\mathbb R^n$ be a non-degenerate critical point of index $1$. Take the sublevel set
\[
\mathcal N_x\equiv\{z\in\mathbb R^n:\phi(z)<\phi(x)\}.
\]
By the Morse lemma, there is a
neighbourhood $U$ of $x$ such that $\mathcal N_x\cap U$ has exactly
two connected components, denoted by $\mathcal N_x^+$ and
$\mathcal N_x^-$, and
$x\in\overline{\mathcal N_x^+}\cap
  \overline{\mathcal N_x^-}.$
We call
$x$ a \emph{separating saddle point} of $\phi$ if
$\mathcal N_x^+$ and $\mathcal N_x^-$ are contained in different
connected components of the global sublevel set $\mathcal N_x$.
\end{definition}

To the best of our knowledge, Definition \ref{definition:separating-saddle} was first introduced in \cite[Definition 4.1]{Herau2011}, see also \cite[\S 2]{Michel2019}.
It is easy to see that the definition is independent of the choice of $U$.

For a function $\phi\in C^2(\R^n)$, let
\[
k_\phi(a)\equiv\#\left\{\text{connected components of } \{\phi\leq a\}\right\}.
\]
For a generic function, separating saddle points give precisely values at which this number drops:

\begin{lemma}
\label{lemma:separating-jump}
Let $\phi\in C^2(\mathbb R^n)$ be a Morse function such that
\begin{enumerate}
\item\label{assumption:coercive} $\phi(x)\to+\infty$ as $|x|\to\infty$;
\item\label{assumption:critical-points} $\phi$ has finitely many critical
points, with distinct critical values.
\end{enumerate}
If $[a,b]$ contains no critical value of $\phi$, then
\[
k_\phi(a)=k_\phi(b).
\]
 Let $c$ be a critical value and let $x$ be the unique critical point with
$\phi(x)=c$. By assumption~\ref{assumption:critical-points}, choose
$\varepsilon>0$ so that $c$ is the only
critical value in $[c-\varepsilon,c+\varepsilon]$. Then
\[
k_\phi(c+\varepsilon)-k_\phi(c-\varepsilon)
=
\begin{cases}
 1,&\text{if $x$ is a local minimum},\\
-1,&\text{if $x$ is a separating saddle point},\\
 0,&\text{otherwise}.
\end{cases}
\]
\end{lemma}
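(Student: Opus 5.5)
The plan is to follow the standard Morse-theoretic argument for how sublevel sets change, working with the closed sublevel sets $\{\phi\le a\}$. By assumption~\ref{assumption:coercive} these sets are compact. For the first claim, when $[a,b]$ contains no critical value, I would use the normalized gradient flow $-\nabla\phi/|\nabla\phi|^2$. It is well defined on the compact set $\{a\le\phi\le b\}$, where $|\nabla\phi|$ is bounded below. Cut it off so that it vanishes outside a neighbourhood of this set; the resulting flow then deforms $\{\phi\le b\}$ onto $\{\phi\le a\}$ as a deformation retract. Since we only have $\phi\in C^2$, the vector field is $C^1$, so the flow exists and the standard argument of \cite[Theorem~3.1]{Milnor1963} applies. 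A deformation retraction induces a bijection on connected components, so $k_\phi(a)=k_\phi(b)$. The same flow shows that $k_\phi$ is finite: every component of $\{\phi\le a\}$ must contain a local minimum, and there are only finitely many critical points.

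For a critical value $c$, with $x$ the unique critical point at level $c$, I would compare $\{\phi\le c+\varepsilon\}$ with $\{\phi\le c-\varepsilon\}\cup V$, where $V$ is a small Morse chart around $x$ given by Lemma~\ref{lemma:morse}. The aim is to show that $\{\phi\le c+\varepsilon\}$ deformation retracts onto $\{\phi\le c-\varepsilon\}\cup(\text{$j$-cell})$, where $j$ is the index of $x$. This is Milnor's handle attachment \cite[Theorem~3.2]{Milnor1963}. Its proof only uses the Morse chart and the same kind of gradient flow, so it works with $C^1$ coordinates, perhaps after replacing smooth bump functions by $C^1$ ones. I would then compute the change in the number of components case by case.
\begin{itemize}
\item If $j=0$, a disjoint point (a $0$-cell) is added, which creates a new component, so $+1$.
\item If $j\ge2$, the attaching sphere $S^{j-1}$ is connected, so the cell is glued into a single component and the count is unchanged.
\item If $j=1$, the $1$-cell is attached at two points $p_\pm$ on $\{\phi\le c-\varepsilon\}$. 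In the Morse chart these lie in $\mathcal N_x^\pm$ respectively. The count drops by $1$ if $p_+$ and $p_-$ lie in different components of $\{\phi\le c-\varepsilon\}$, and is unchanged otherwise.
\end{itemize}

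The main obstacle is to show that, in the index $1$ case, this condition is equivalent to $x$ being a separating saddle point. That definition refers to the open set $\mathcal N_x=\{\phi<c\}$ rather than to $\{\phi\le c-\varepsilon\}$. I would prove that the components of $\mathcal N_x$ correspond bijectively to the components of $\{\phi\le c-\varepsilon\}$ for small $\varepsilon$. First, $\mathcal N_x=\bigcup_{\delta>0}\{\phi\le c-\delta\}$ is an increasing union. Second, the interval $[c-\varepsilon,c-\delta]$ contains no critical value, so the flow argument above gives bijections on components at every stage. These two facts give the correspondence. Since each component of $\{\phi\le c-\varepsilon\}$ is contained in a single component of $\mathcal N_x$, the points $p_\pm$ lie in different components of $\{\phi\le c-\varepsilon\}$ exactly when $\mathcal N_x^\pm$ lie in different components of $\mathcal N_x$. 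Here we also use that $\mathcal N_x^\pm$ are connected and contain $p_\pm$. This identifies the $-1$ case with separating saddles, and all remaining cases give $0$.
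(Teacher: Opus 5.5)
Your proposal is correct and follows essentially the paper's route: Milnor's deformation retraction when $[a,b]$ contains no critical value, then the local Morse model at $x$, with the index $0$, index $\geq 2$ and index $1$ cases treated exactly as you treat them. Your explicit use of the handle-attachment theorem, and your identification of the components of $\{\phi<c\}$ with those of $\{\phi\le c-\varepsilon\}$ through the increasing union $\bigcup_{\delta>0}\{\phi\le c-\delta\}$, spell out a step the paper leaves implicit. One small point: Milnor's Theorem~3.2 is stated only for sufficiently small $\varepsilon$, so you should first shrink the given $\varepsilon$ using the first claim.
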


\begin{proof}
Suppose first that $[a,b]$ contains no critical value. By
assumption~\ref{assumption:coercive}, the set
$\{a\leq\phi\leq b\}$ is compact, and $|\D\phi|$ is bounded away
from zero there. The negative gradient flow therefore gives a
deformation retraction of $\{\phi\leq b\}$ onto
$\{\phi\leq a\}$, see \cite[Theorem~3.1]{Milnor1963}. In particular,
$k_\phi(a)=k_\phi(b)$.

Now let $j$ be the index of $x$ and choose a Morse coordinate
neighbourhood $U$ of $x$. Again by
assumption~\ref{assumption:coercive}, the set
\[
\{c-\varepsilon\leq\phi\leq c+\varepsilon\}\setminus U
\]
is compact, and $\D\phi$ does not vanish there. Thus the negative
gradient flow of $\phi$ carries each point of this set to the level
$c-\varepsilon$, unless it first enters $U$. Consequently, any change
in the connected components of the sublevel sets between
$c-\varepsilon$ and $c+\varepsilon$ is determined by the local model
\[
\phi(\xi)=c-|\xi_-|^2+|\xi_+|^2,
\qquad \xi_-\in\mathbb R^j,
\quad \xi_+\in\mathbb R^{n-j}.
\]
If $j=0$, then $\{\phi \leq c+\e\}$ has one more connected component than $\{\phi\leq c-\e\}$, so $k_\phi$ increases
by one. If $j\geq2$, then $\mathcal N_x\cap U$ is connected, since
$\mathbb S^{j-1}$ is connected, and $k_\phi$ is unchanged. If $j=1$,
then $\mathcal N_x\cap U$ has two components, which become connected
in $\{\phi<c+\e\}$. Hence $k_\phi$ decreases by one exactly when
these two local components lie in different components of
$\{\phi<c\}$, and is unchanged otherwise. Thus, by Definition~\ref{definition:separating-saddle}, $k_\phi$ decreases by one exactly when
$x$ is a separating saddle point. 
\end{proof}

As a simple consequence, cf.\ \cite[Proposition 5.2]{Herau2011}, we find a relationship between the multiplicity of local minima and separating saddle points for a generic Morse function:

\begin{corollary}
\label{corollary:morse-count}
Let $\phi\in C^2(\mathbb R^n)$ be as in 
Lemma~\ref{lemma:separating-jump}. 
If $n_0$ is the number of
local minima and $s$ the number of separating saddle points of $\phi$, then
\[
s=n_0-1.
\]
\end{corollary}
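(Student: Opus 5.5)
The plan is to telescope the jumps of $k_\phi$ across all critical values, using Lemma~\ref{lemma:separating-jump}.

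Let $c_1<c_2<\dots<c_N$ be the finitely many critical values of $\phi$. They are distinct by assumption~\ref{assumption:critical-points}, and for each $i$ let $x_i$ be the unique critical point at level $c_i$. Note that $N\ge1$: by coercivity (assumption~\ref{assumption:coercive}) $\phi$ attains a global minimum, which is a critical point. Choose $\varepsilon>0$ smaller than half the minimal gap between consecutive critical values. Then each interval $[c_i+\varepsilon,c_{i+1}-\varepsilon]$ contains no critical value.

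\textbf{Endpoints.} For $a<c_1$ the set $\{\phi\le a\}$ is empty, since $c_1=\min\phi$; hence $k_\phi(c_1-\varepsilon)=0$. For $b\ge c_N+\varepsilon$ there are no critical values in $[c_N+\varepsilon,b]$, so the first part of Lemma~\ref{lemma:separating-jump} gives $k_\phi(b)=k_\phi(c_N+\varepsilon)$. Moreover, for $b$ large, $\{\phi\le b\}$ is connected. To see this, note that it is compact by coercivity. By the same lemma, $\{\phi\le b\}$ deformation retracts onto $\{\phi\le b'\}$ for every $b'\ge b$, while $\bigcup_{b'}\{\phi\le b'\}=\mathbb R^n$ is connected. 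Concretely, if two components existed, pick points $p,q$ in different components and join them by a segment; the segment lies in $\{\phi\le b'\}$ for some $b'$. The retraction from $\{\phi\le b'\}$ onto $\{\phi\le b\}$ fixes $\{\phi\le b\}$, so it maps the segment to a path in $\{\phi\le b\}$ from $p$ to $q$, a contradiction. Hence $k_\phi(c_N+\varepsilon)=1$.

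\textbf{Telescoping.} Write
\[
1-0=\sum_{i=1}^N\bigl[k_\phi(c_i+\varepsilon)-k_\phi(c_i-\varepsilon)\bigr]+\sum_{i=1}^{N-1}\bigl[k_\phi(c_{i+1}-\varepsilon)-k_\phi(c_i+\varepsilon)\bigr].
\]
The second sum vanishes by the first part of Lemma~\ref{lemma:separating-jump}. By the second part, the $i$-th term of the first sum equals $+1$ if $x_i$ is a local minimum, $-1$ if it is a separating saddle, and $0$ otherwise. Since the critical points correspond bijectively to the critical values, the first sum equals $n_0-s$. Therefore $1=n_0-s$, i.e.\ $s=n_0-1$.

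\textbf{Main obstacle.} The only step not already covered by the lemma is $k_\phi=1$ at high levels; the deformation-retraction argument above settles it. Everything else is bookkeeping.
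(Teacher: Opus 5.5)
Your proof is correct and follows the paper's argument: you telescope the jumps of $k_\phi$ given by Lemma~\ref{lemma:separating-jump}, starting from $k_\phi=0$ below the minimum and ending at $k_\phi=1$ at high levels. The only difference is how you show that high sublevel sets are connected. You use the retraction of $\{\phi\le b'\}$ onto $\{\phi\le b\}$ (from \cite[Theorem~3.1]{Milnor1963}, which appears in the lemma's proof rather than its statement) together with the exhaustion by segments, whereas the paper invokes the Mountain Pass Lemma; also, your phrase ``$\{\phi\le b\}$ deformation retracts onto $\{\phi\le b'\}$'' has the direction reversed, although your concrete argument uses the correct one.
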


\begin{proof}
By Lemma~\ref{lemma:separating-jump}, $k_\phi$ increases by one when a
local minimum is crossed, decreases by one when a separating saddle
point is crossed, and is unchanged at every other critical value and between critical values.
Then \ref{assumption:coercive}  gives $k_\phi=0$ below
the lowest critical value while, by \ref{assumption:coercive}-\ref{assumption:critical-points}, for sufficiently large $a$ the set $\{\phi\leq a\}$ is connected. Indeed, it suffices to take $a$ to be the maximal critical value: if $\{\phi\leq a\}$ had two connected components then by \ref{assumption:coercive} we could apply the Mountain Pass Lemma between the minimum of $\phi$ in each component to get a critical value above $a$, see e.g.\ \cite{Ambrosetti2007}.
Hence we have $k_\phi(a)=1$ for large $a$ and the total change of $k_\phi$ is
$1=n_0-s,$
as desired.
\end{proof}

\section{Separating functions}
\label{sec:sep}

Let $\gamma\subset\mathbb C$ be a Jordan curve, let $\Omega$ be the bounded component of $\C\setminus \gamma$, and let
$H\colon\C\to\mathbb C$ be $1$-Lipschitz. Consider the curve
\[
\Gamma\equiv\{(z,H(z)):z\in\gamma\}\subset\mathbb R^{2\times2}
\]
and, following \cite{Szekelyhidi2005}, define $L_\Gamma\colon\mathbb R^{2\times2}\to\mathbb R$ by
\begin{equation}
\label{eq:L-Gamma}
L_\Gamma(A)\equiv\det A+
\begin{cases}
\displaystyle
\max\left\{0,\sup_{B\in\Gamma}\left(-\det(A-B)\right)\right\}
&\text{if }a_+\in\overline\Omega,\\
\displaystyle
\sup_{B\in\Gamma}\left(-\det(A-B)\right)
&\text{if }a_+\notin\Omega.
\end{cases}
\end{equation}

We give here two specially important examples of such functions to help the reader digest the definition. Strictly speaking, the first example does not fit in the above setting, since the curve is a straight line, but it is very simple nonetheless.

\begin{example}
For
$$\Gamma=\textup{span}  \begin{bmatrix} 0 & 1 \\ -1 & 0 \end{bmatrix},$$
that is, $\gamma=i\R$ and $H=0$, let us take for definiteness $\Omega=\{z\in\C:\Re z>0\}$. Writing $\det A=|a_+|^2-|a_-|^2$ and expanding the square, one computes
\[
\sup_{B\in\Gamma}\bigl(-\det(A-B)\bigr)
=|a_-|^2-(\Re a_+)^2.
\]
Consequently, if $A$ is symmetric, i.e.\ if $a_+\in\R$, one finds that $L_\Gamma(A)= \detpp(A)$.
\end{example}

\begin{example}
For $\Gamma=\mathrm{SO}(2)$, that is, $\gamma=\mb S^1$ and $H=0$,  one computes that
\[
\sup_{B\in\mathrm{SO}(2)}\bigl(-\det(A-B)\bigr)
=|a_-|^2-\bigl||a_+|-1\bigr|^2.
\]
Consequently $L_{\mathrm{SO}(2)}$ coincides with the function $L$ defined in \eqref{eq:L}. 
\end{example}
Instead of Theorem~\ref{thm:L}, we will prove the following more general result:

\begin{theorem}
\label{thm:main}
Let $\Gamma$ be a $1$-Lipschitz graph over the Jordan curve
$\gamma\subset\C$, as defined above. For every
$A=(a_+,a_-)\in\mathbb R^{2\times2}$ and every
$u\in C_c^\infty(\mathbb R^2)$,
\[
\int_{\mathbb R^2}
\bigl[L_\Gamma(A+\D^2u)-L_\Gamma(A)\bigr]\d x\geq0.
\]
\end{theorem}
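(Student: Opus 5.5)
The plan is to run the argument sketched in the introduction, with the two matrices $\pm\Id$ replaced by two points of $\Gamma$ chosen according to $A$. Since $\D^2u$ is symmetric, replacing $A$ by its symmetric part changes $\int[L_\Gamma(A+\D^2u)-L_\Gamma(A)]$ only through null-Lagrangian terms. Precisely, $\det$ and the sup in \eqref{eq:L-Gamma} depend on $A$ only through $a_-$ and $a_+$. Adding $\D^2u$ changes $a_-$ arbitrarily but changes $a_+$ only by a real number, so $\Im a_+$ is frozen along the whole integrand. I would first handle the model case $\Im a_+=0$, i.e.\ $A$ symmetric, and then treat $\Im a_+\neq0$ by replacing $\gamma$ with its slice at height $\Im a_+$; this reduction has not been checked. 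Once $A$ is symmetric, the points of $\Gamma$ that matter are $B=(p,H(p))$ with $p\in\gamma\cap\R$. These $B$ are symmetric, and for any two of them the 1-Lipschitz property gives
\[
\det(B_1-B_2)=(p_1-p_2)^2-|H(p_1)-H(p_2)|^2\geq0,
\]
so $B_1-B_2$ is semidefinite. This is what replaces $\Id-(-\Id)=2\Id>0$ in \eqref{eq:keyid}.

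The key step is a comparison function. If $a_+\in\Omega\cap\R$, let $p<a_+<q$ be the nearest points of $\gamma\cap\R$ on either side, and put $B_1=(p,H(p))$, $B_2=(q,H(q))$. For symmetric $X$ with $x_+\in(p,q)$ I would define
\[
F(X)\equiv\detpp(X-B_1)+\detpp(B_2-X)+\text{(null Lagrangian)}-\min\{\detpm(X-B_1),\detpm(X-B_2)\},
\]
mimicking \eqref{eq:keyid}. I would then prove, by an elementary $2\times2$ computation generalising Lemma~\ref{lemma:decomposition}, that $F\leq L_\Gamma$ on symmetric matrices. The input is that the sup over $\Gamma$ dominates the values at $B_1$ and $B_2$, while on $\overline\Omega$ the $\max\{0,\cdot\}$ is produced by the $\detpp$ terms. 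I also need $F(A)=L_\Gamma(A)$, or at least enough equality after an averaging or rank-one convexity reduction. This is delicate, since the sup in \eqref{eq:L-Gamma} need not be attained at a real point of $\gamma$. I would try to use the Jordan curve property: on symmetric $X$ the competitor $B\in\Gamma$ can be moved along $\gamma$ to a real crossing without decreasing $-\det(X-B)$, using $|H'|\leq1$. For $a_+\notin\Omega$ there is no $\max\{0,\cdot\}$, and a single nearest real crossing, with one $\detpm$ term, should suffice.

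It then remains to show $\int[F(A+\D^2u)-F(A)]\geq0$. Set $\phi_1=\tfrac12 x^T(A-B_1)x+u$ and $\phi_2=\tfrac12x^T(B_2-A)x-u$. Both are convex at infinity, since $A-B_1>0$ and $B_2-A>0$ on the relevant region; if necessary I would perturb generically to obtain the hypotheses of Lemma~\ref{lemma:separating-jump}. Lemma~\ref{lemma:degree-count} turns $\int[\detpp(\D^2\phi_i)-\detpp(\cdot)]$ into $\int(n_{0,i}(y)-1)\,\d y$. Corollary~\ref{corollary:morse-count} gives $n_{0,i}-1=s_i$, and Lemma~\ref{lemma:area} converts $\int s_i$ into $\int_{M_i}\detpm(\D^2\phi_i)$. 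The claim then reduces to the pointwise inequality
\[
1_{M_1}\detpm(\D^2\phi_1)+1_{M_2}\detpm(\D^2\phi_2)\geq\min\{\detpm(\D^2\phi_1),\detpm(\D^2\phi_2)\}.
\]
Here $\D^2\phi_1+\D^2\phi_2=B_2-B_1\geq0$. Hence at a point where both Hessians are indefinite, a saddle of $\phi_1^y$ is a saddle of $\phi_2^{y'}$ whose negative directions are constrained. Lemma~\ref{lemma:planar-separation} should then show that the point cannot be a non-separating saddle for both functions.

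I expect the main obstacle to be the comparison step: establishing $F\leq L_\Gamma$ with equality at $A$ for a general 1-Lipschitz $H$, i.e.\ justifying that only real points of $\Gamma$ are needed. If this fails, my fallback is to replace the two points $B_1,B_2$ by a sup over all pairs of real crossings, giving a family of inequalities. I would then take $F$ to be the maximum over this family, noting that each member is handled by the Morse argument above.
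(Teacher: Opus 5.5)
\textbf{There is a genuine gap: the Morse-theoretic part is right, but your plan has no valid argument for the matrices $A$ at which $L_\Gamma(A)>\det A$, or at which $a_+\notin\Omega$.}

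\textbf{What matches the paper.} Your comparison function $F$ is the lower bound of Lemma~\ref{lemma:decomposition}. The passage through Proposition~\ref{proposition:area} and Lemma~\ref{lemma:planar-separation}, using $\D^2\phi_1+\D^2\phi_2=B_2-B_1\geq0$, is exactly how the paper concludes.

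\textbf{Where it fails.} The problem is the step you flag yourself. For every $A$ you need both $F(A)=L_\Gamma(A)$ and the coercivity $A-B_1>0$, $B_2-A>0$ required by Proposition~\ref{proposition:area}. Neither holds in general.

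Your proposed fix is to move a maximising $B\in\Gamma$ to a crossing of $\gamma$ with the horizontal line through $a_+$ without decreasing $-\det(A-B)$. This is false. For $H=0$ the supremum in \eqref{eq:L-Gamma} equals $|a_-|^2-\operatorname{dist}(a_+,\gamma)^2$. The nearest point of $\gamma$ to $a_+$ typically lies off that line, and if $a_+\notin\overline\Omega$ the line may not meet $\gamma$ at all.

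For a concrete failure, let $\Omega$ be a long, thin horizontal domain with $a_+$ at its centre, and take $\operatorname{dist}(a_+,\gamma)<|a_-|<\min\{a_+-\alpha,\beta-a_+\}$. Then $A-P>0$ and $Q-A>0$, so $F(A)=\det A<L_\Gamma(A)$. Jensen's inequality for $F$ then says nothing about $L_\Gamma$ at $A$.

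Your fallback, a maximum over pairs of crossings, still uses only points of $\Gamma$ on the same slice, so it fails for the same reason. Moreover, when $|a_--H(\alpha)|\geq a_+-\alpha$ the matrix $A-P$ has a non-positive eigenvalue. Then $\phi_1$ is not coercive and the Morse count is unavailable anyway.

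\textbf{The missing idea: a case split.} The paper's case split makes the delicate step unnecessary.

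\emph{Case 1.} Suppose $a_+\notin\Omega$, or $a_+\in\Omega$ and $\sup_{B\in\Gamma}(-\det(A-B))\geq0$. By compactness of $\Gamma$ there is $B_0\in\Gamma$ with $L_\Gamma(A)=\ell_{B_0}(A)$, where $\ell_{B_0}(X)\equiv\det X-\det(X-B_0)$. This function is affine, and $\ell_{B_0}\leq L_\Gamma$ on all of $\R^{2\times2}$, directly from both branches of \eqref{eq:L-Gamma}. Since $\int\D^2u=0$, the inequality follows at once. No Morse theory is needed, and there is no need to locate the maximiser.

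\emph{Case 2.} Otherwise $|a_--H(z)|<|a_+-z|$ for all $z\in\gamma$, so $L_\Gamma(A)=\det A$. Evaluating at $z=\alpha,\beta$ gives $A-P>0$ and $Q-A>0$. Coercivity and $F(A)=\det A=L_\Gamma(A)$ are then automatic, and your argument runs as written.

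\textbf{Two smaller points.}
\begin{enumerate}
\item The comparison $F\leq L_\Gamma$ must hold for all $X$ with $\Im x_+=\Im a_+$, not only for $x_+\in(p,q)$, because $\Delta u$ moves $x_+$ freely along the real axis. Lemma~\ref{lemma:decomposition} does give this.
\item No reduction to symmetric $A$ is needed. Your remark that such a reduction only changes null-Lagrangian terms is wrong, since $L_\Gamma$ depends genuinely on $\Im a_+$. With $(\alpha,\beta)$ the component of $\Omega\cap(a_++\R)$ containing $a_+$, the matrices $A-P$ and $Q-A$ are already symmetric. That is your slicing idea, and it is what the paper does.
\end{enumerate}
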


We begin with a simple but key algebraic inequality. In the model case where $\Gamma=\tp{SO}(2)$ this becomes an equality, see already  Remark~\ref{rem:so2-decomposition}.

\begin{lemma}
\label{lemma:decomposition}
Let $(\alpha,\beta)$ be a component of the intersection of $\Omega$
with a line parallel to the real axis, ordered so that
$\beta-\alpha>0$, and take the points $P,Q\in\Gamma$ given by
\[
P\equiv(\alpha,H(\alpha)),
\qquad
Q\equiv(\beta,H(\beta)).
\]
For every $A=(a_+,a_-)$ with $\Im a_+=\Im\alpha$, the matrices
$A-P$ and $Q-A$ are symmetric and
\begin{equation}
\label{eq:decomposition}
\begin{aligned}
L_\Gamma(A)-\det A\geq{}
&\detpp(A-P)-\det(A-P)\\
& +\detpp(Q-A)-\det(Q-A)\\
&-\min\bigl\{\detpm(A-P),\detpm(Q-A)\bigr\}.
\end{aligned}
\end{equation}
\end{lemma}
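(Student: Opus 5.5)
The plan is a direct case analysis, using two lower bounds for the left-hand side that follow from definition \eqref{eq:L-Gamma}.

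\emph{Setup.} Since $(\alpha,\beta)$ is a component of $\Omega\cap\{\Im z=\Im\alpha\}$, its endpoints lie on $\partial\Omega=\gamma$. Hence $P,Q\in\Gamma$. Because $\Im a_+=\Im\alpha=\Im\beta$, the numbers $s\equiv a_+-\alpha$ and $t\equiv\beta-a_+$ are real, with $s+t=\beta-\alpha>0$. So $A-P=(s,\,a_--H(\alpha))$ and $Q-A=(t,\,H(\beta)-a_-)$ are symmetric. In both branches of \eqref{eq:L-Gamma} the supremum over $B\in\Gamma$ may be tested at $B=P$ and at $B=Q$. Since $\det(A-Q)=\det(Q-A)$ for $2\times2$ matrices, this gives
\[
L_\Gamma(A)-\det A\geq\max\bigl\{-\det(A-P),\,-\det(Q-A)\bigr\},
\]
and also $L_\Gamma(A)-\det A\geq0$ whenever $a_+\in\overline\Omega$.

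\emph{A pointwise formula.} Set $f(X)\equiv\detpp(X)-\det X$ for symmetric $X$. There are three regimes:
\begin{enumerate}
\item if $X>0$, then $f(X)=0$ and $\detpm(X)=0$;
\item if $\det X<0$, then $f(X)=-\det X=\detpm(X)>0$;
\item otherwise ($X\leq0$, or $X\ge0$ degenerate), $f(X)=-\det X\leq0$ and $\detpm(X)=0$.
\end{enumerate}
The right-hand side of \eqref{eq:decomposition} is $R\equiv f(A-P)+f(Q-A)-\min\{\detpm(A-P),\detpm(Q-A)\}$. I will go through the $3\times3$ combinations of regimes for $X=A-P$ and $Y=Q-A$.

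\emph{The cases.}
\begin{itemize}
\item If $\det X<0$ and $\det Y<0$, then $R=\detpm(X)+\detpm(Y)-\min\{\cdot,\cdot\}=\max\{-\det X,-\det Y\}$, which is bounded by the first lower bound.
\item If one of $X,Y$ has negative determinant and the other is $>0$, then the min is $0$. So $R$ equals $-\det$ of the matrix with negative determinant, again bounded by the first bound.
\item Whenever one of them, say $X$, is in regime (iii), the min vanishes. Then $R=-\det X+f(Y)$ with $-\det X\le 0$.
\begin{itemize}
\item If $f(Y)\le0$ (regimes (i) or (iii)), then $R\le -\det X$.
\item If $f(Y)=-\det Y>0$, then $R\le-\det Y$.
\end{itemize}
Either way the first lower bound suffices.
\item The only remaining case is $X>0$ and $Y>0$, where $R=0$. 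Here positivity forces $s>0$ and $t>0$, i.e.\ $a_+\in(\alpha,\beta)\subset\Omega$, so the second lower bound gives $L_\Gamma(A)-\det A\ge0=R$.
\end{itemize}

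\emph{Main point of care.} I expect the only real subtlety to be this last case. The $\max\{0,\cdot\}$ in \eqref{eq:L-Gamma} is available only when $a_+\in\overline\Omega$. This is exactly guaranteed when both $A-P$ and $Q-A$ are positive definite, because their $a_+$-components $s$ and $t$ must then be positive. One must also check that no case requires the zero bound when $a_+\notin\overline\Omega$; the case split above confirms this. For $\Gamma=\mathrm{SO}(2)$, $\alpha=-\sqrt{1-(\Im a_+)^2}$ and $\beta=-\alpha$, and the same bookkeeping should yield equality, consistent with Remark~\ref{rem:so2-decomposition}.
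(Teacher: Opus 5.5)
Your proof is correct and follows essentially the same route as the paper. Both arguments use the two lower bounds $\max\{-\det(A-P),-\det(Q-A)\}$ and $0$ (the latter only for $a_+\in\overline\Omega$), together with a case split on the definiteness of $A-P$ and $Q-A$; the only difference is that the paper uses $Q-P\geq 0$ (from the $1$-Lipschitz property of $H$) to evaluate the right-hand side exactly when one matrix is negative definite, whereas your direct bounds in the cases involving regime (iii) show that this hypothesis is not needed for the lemma itself.

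Your closing aside on $\mathrm{SO}(2)$ is accurate only for real $a_+$, as in Remark~\ref{rem:so2-decomposition}: in general $\alpha$ has imaginary part $\Im a_+$, $\beta=-\overline\alpha$, and the inequality can be strict.
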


\begin{proof}
Let $M\equiv A-P,N\equiv Q-A$ and note that both matrices are symmetric, since their conformal parts are real. Also, since $P,Q\in \Gamma$ and $H$ is 1-Lipschitz, we see that 
\begin{equation}
\label{eq:sum}
Q-P=M+N\geq 0.
\end{equation}
Since $P,Q\in\Gamma$, from the definition of $L_\Gamma$ we have
\begin{equation}
\label{eq:trivialowerbound}
L_\Gamma(A)-\det A
\geq
\max\{-\det M, -\det N\}.
\end{equation}

If, say, $M<0$, then \eqref{eq:sum} gives
$N=(Q-P)-M\geq-M>0$. Hence
$\det N\geq\det(-M)=\det M$, and the right-hand side of
\eqref{eq:decomposition} is $-\det M$, which equals the maximum in
\eqref{eq:trivialowerbound}. The case $N<0$ is identical and, by
\eqref{eq:sum}, the two cases cannot occur simultaneously.

Suppose next that neither matrix is negative definite. Then
\[
\detpp(M)-\det M=\detpm(M),
\qquad
\detpp(N)-\det N=\detpm(N),
\]
and the right-hand side of \eqref{eq:decomposition} becomes
\[
\detpm(M)+\detpm(N)-\min\{\detpm(M),\detpm(N)\}
=\max\{\detpm(M),\detpm (N)\}.
\]
Thus, if at least one matrix is not positive semidefinite, then this equals $\max\{-\det M,-\det N\}$,
so the conclusion again follows from \eqref{eq:trivialowerbound}.

It remains to consider the case $M,N\geq0$, and then the right-hand side of
\eqref{eq:decomposition} vanishes. In this case $a_+\in [\alpha,\beta]\subset\overline\Omega$ and the conclusion follows from the first branch
in \eqref{eq:L-Gamma}. 
\end{proof}

\begin{remark}\label{rem:so2-decomposition}
If $\Gamma=\mathrm{SO}(2)$ and $a_+\in\mathbb R$, a point of
$\mathbb S^1$ nearest to $a_+$ is one of $\{-1,1\}$. Hence
\[
\sup_{B\in\mathrm{SO}(2)}\bigl(-\det(A-B)\bigr)
=\max\{-\det(A+\Id),-\det(\Id-A)\}.
\]
One can then check that, in this case, \eqref{eq:decomposition} is an equality when
$\alpha=-1$ and $\beta=1$. In fact, as
$\det(A+\Id)+\det(\Id-A)=2+2\det A,$
one even has the simpler identity in \eqref{eq:keyid}.
\end{remark}

\section{Consequences of the area formula}

The purpose of this section is to prove the following result, which will relate the first two terms on the right-hand side of \eqref{eq:decomposition} with the last term:
\begin{proposition}
\label{proposition:area}
Let $\phi\in C^\infty(\mathbb R^2)$ be such that
$\phi(x)=\frac12\langle Ax,x\rangle$ outside a compact set,  for some
$0<A\in \R^{2\times 2}_\tp{sym}$. Then
\begin{equation}
\label{eq:area-separating}
\int_{\mathbb R^2}
\bigl[\detpp(\D^2\phi(x))-\det A\bigr]\d x
=\int_M\detpm(\D^2\phi(x))\d x,
\end{equation}
where we define
\begin{equation}
\label{eq:defphiy}
\phi^y(x)\equiv\phi(x)-y\cdot x,
\end{equation}
where we write $T\equiv \D \phi$, and $M$ is the measurable set
$$M\equiv\{x\in \R^2:x\text{ is a separating saddle point of }
      \phi^{T(x)}\}.$$
\end{proposition}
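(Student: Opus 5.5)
The plan is to apply the area formula to $T=\D\phi$ twice and link the two resulting counts through Corollary~\ref{corollary:morse-count}. For every $y$, the critical points of $\phi^y$ are exactly $T^{-1}(y)$, and $\D^2\phi^y=\D^2\phi$. By Sard's theorem ($\phi$ is smooth), a.e.\ $y$ is a regular value of $T$. For such $y$, $\phi^y$ is a Morse function.

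First I check the hypotheses of Lemma~\ref{lemma:separating-jump} for a.e.\ $y$. Since $\phi=\frac12\langle Ax,x\rangle$ outside a compact set $K$ and $A>0$, the function $\phi^y$ is coercive. Its critical points outside $K$ reduce to the single point $A^{-1}y$, and only when $A^{-1}y\notin K$. Inside $K$, regularity of $y$ gives finitely many critical points. The condition that critical values be distinct must also hold for a.e.\ $y$. I would prove this as follows. For two distinct nondegenerate critical points $x_1(y),x_2(y)$, which depend smoothly on $y$ locally by the inverse function theorem, the difference $g(y)=\phi^y(x_1(y))-\phi^y(x_2(y))$ has gradient $x_2(y)-x_1(y)\neq0$. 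Hence $\{g=0\}$ is a null set locally, and a countable covering argument makes the bad set of $y$ null. So for a.e.\ $y$, Corollary~\ref{corollary:morse-count} gives
\[
s(y)=n_0(y)-1.
\]

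Next come the two area formula computations. Local minima of $\phi^y$ are the points $x\in T^{-1}(y)$ with $\D^2\phi(x)>0$, where $\det\D T>0$. Therefore
\[
\int_{\R^2} n_0(y)\,\d y=\int_{\{\D^2\phi>0\}}\det\D^2\phi\,\d x=\int\detpp(\D^2\phi)\,\d x.
\]
Both sides are infinite, so I work with differences. For $|y|$ large, $T^{-1}(y)=\{A^{-1}y\}$ and $n_0=1$. Take a large ball $B_R$ with $K\subset B_R$, and let $D_R=T(B_R)$. Outside $B_R$, $T$ is the linear map $A$, which is injective. Applying the area formula on $B_R$ gives
\[
\int_{B_R}\detpp(\D^2\phi)\,\d x=\int n_0^{B_R}(y)\,\d y,
\]
where $n_0^{B_R}(y)$ counts only minima in $B_R$. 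A degree argument gives $\deg(T,B_R,y)=1$ for $y\in D_R$ away from $T(\partial B_R)$, since $T$ is linear near $\partial B_R$. This identifies $\int_{B_R}\det A\,\d x=|A(B_R)|$. Combining the two, and using that $A$ is a bijection on the complement,
\[
\int_{\R^2}\bigl[\detpp(\D^2\phi)-\det A\bigr]\d x=\int_{\R^2}\bigl(n_0(y)-1\bigr)\d y=\int_{\R^2}s(y)\,\d y.
\]
Index-one points have $\det\D^2\phi<0$, so $|\det\D T|=\detpm(\D^2\phi)$ there. Since $s(y)=\#(T^{-1}(y)\cap M)$, the area formula on $M$ yields
\[
\int_{\R^2}s(y)\,\d y=\int_M\detpm(\D^2\phi)\,\d x.
\]

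Measurability of $M$ and convergence of the integrals need care. All integrands vanish, or the counts equal their limiting values, outside the compact set $T(K)\cup A(K)$, so everything is finite. Measurability of $M$ is most easily handled by working on the full-measure open set $T^{-1}(\text{regular, generic }y)$. The point $x$ is a separating saddle of $\phi^{T(x)}$, and I would argue this property is locally constant in $x$ wherever the critical values of $\phi^{T(x)}$ are distinct; this stability argument would use Lemma~\ref{lemma:separating-jump}. That makes $M$ measurable up to a null set.

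I expect the main obstacle to be the genericity step, namely distinct critical values for a.e.\ $y$, together with this stability/measurability of $M$. The rest is bookkeeping with the area formula.
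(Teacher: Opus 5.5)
Your skeleton is the paper's: genericity for a.e. $y$, then Corollary~\ref{corollary:morse-count} giving $s=n_0-1$, then the area formula applied to $s(y)=\#(T^{-1}(y)\cap M)$. Two sub-steps go differently, and the second one is not yet a proof.

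\textbf{The count of minima.} For $\int[\detpp(\D^2\phi)-\det A]\,\d x=\int(n_0-1)\,\d y$, the paper argues in three moves.
\begin{enumerate}
\item It replaces $\det A$ by $\det\D^2\phi$ using the null Lagrangian property.
\item It applies the area formula on the index-one and index-two sets, obtaining $\int(n_1-n_2)\,\d y$.
\item It uses the degree identity $n_0-n_1+n_2=1$.
\end{enumerate}
Your route is correct and more elementary. You apply the area formula only on $B_R\cap\{\D^2\phi>0\}$ and use $n_0=n_0^{B_R}+1_{\R^2\setminus A(B_R)}$ a.e., which holds because $T=A$ is injective off $B_R$ and $A^{-1}y$ is then a nondegenerate minimum. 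This needs neither degree theory nor the null Lagrangian property.

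Your degree sentence is therefore superfluous, since $\int_{B_R}\det A\,\d x=|A(B_R)|$ is just the linear change of variables. As written it is also false: $\deg(T,B_R,y)=1$ only for $y\in A(B_R)$. The set $T(B_R)$ can be much larger (take $|\D u|$ huge somewhere in $K$), and there the degree is $0$.

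\textbf{Measurability of $M$.} This step has two problems.
\begin{enumerate}
\item $T^{-1}(G)$, with $G$ the set of regular generic values, need not have full measure. The set $Z=\{\det\D^2\phi=0\}$ may contain an open set; its image is null, but the open set itself lies in $T^{-1}(\R^2\setminus G)$. What is true, and sufficient since $M\subset\Sigma\equiv\{\det\D^2\phi<0\}$, is that $\Sigma\setminus T^{-1}(G)$ is null, because $T|_\Sigma$ is a local diffeomorphism.
\item Lemma~\ref{lemma:separating-jump} characterizes separating saddles for one fixed $y$. It says nothing about how $k_{\phi^y}$ varies with $y$, so it cannot by itself give local constancy; you need a perturbation argument.
\end{enumerate}

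The paper supplies that argument. If $x_0$ is a non-separating saddle, a compact path joining the two local components inside $\{\psi_{x_0}<0\}$ stays inside $\{\psi_x<0\}$ for $x$ near $x_0$. Hence non-separation is an open condition in $\Sigma$, and $M$ is relatively closed in $\Sigma$. This makes $M$ Borel, with no genericity needed.

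If you prefer to keep your local-constancy formulation, combine this persistence with a counting fact. Near a regular generic $y_0$ the critical points move smoothly with fixed index, so $n_0$, and hence $s=n_0-1$, is constant. Then no separating saddle can become non-separating either.
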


As we will see, Proposition \ref{proposition:area} follows from the area formula together with  Corollary~\ref{corollary:morse-count}.
Let us begin with a simple lemma:

\begin{lemma}\label{lemma:genericity}
In the setting of Proposition \ref{proposition:area}, for a.e.\ $y\in\mathbb R^2$ the function $\phi^y$
has finitely many non-degenerate critical points and distinct critical values.
\end{lemma}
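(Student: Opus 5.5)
Our plan is to combine Sard's theorem for the gradient map $T=\D\phi\colon\R^2\to\R^2$ with a small perturbation argument for the critical values. The critical points of $\phi^y$ are exactly the points of $T^{-1}(y)$, and such a point $x$ is non-degenerate if and only if $\det\D T(x)=\det\D^2\phi(x)\neq0$. Since $\phi\in C^\infty$, Sard's theorem shows that the set of critical values of $T$ has measure zero. Hence for a.e.\ $y$ every critical point of $\phi^y$ is non-degenerate. Finiteness comes from coercivity. Outside a compact set $K$ we have $T(x)=Ax$, so for $|y|$ fixed, $T(x)=y$ with $x\notin K$ forces $x=A^{-1}y$. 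Thus $T^{-1}(y)\subset K\cup\{A^{-1}y\}$. The preimage is discrete, because each point is non-degenerate and the inverse function theorem applies, and a discrete subset of a compact set is finite. The function $\phi^y$ is also coercive, since $A>0$, so assumption~\ref{assumption:coercive} of Lemma~\ref{lemma:separating-jump} holds as well.

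The main obstacle is distinct critical values. Here we use that at a critical point $x$ of $\phi^y$ the critical value is $\psi(x)\equiv\phi(x)-T(x)\cdot x$, which is the Legendre-type function. Near a regular value $y_0$ of $T$, the inverse function theorem gives finitely many smooth local inverses $x_1(y),\dots,x_k(y)$ of $T$ on a neighbourhood $V$ of $y_0$. The critical values are then $c_i(y)=\phi(x_i(y))-y\cdot x_i(y)$. Differentiating, and using $\D\phi(x_i(y))=y$, gives $\D_y c_i(y)=-x_i(y)$. For $i\neq j$ we have $x_i(y)\neq x_j(y)$, so $\D(c_i-c_j)=x_j-x_i\neq0$. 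The coincidence set $\{c_i=c_j\}$ is therefore a smooth curve in $V$, and in particular has measure zero.

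To make this local picture valid, we need the number of preimages to be locally constant near regular values, with no preimages escaping. This holds because $T$ is proper: $|T(x)|\geq c|x|$ outside $K$. The set of regular values is open, and we can cover it by countably many such neighbourhoods $V$. Combining the countably many null sets with the Sard null set then gives the conclusion for a.e.\ $y$.
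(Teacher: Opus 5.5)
Your proposal is correct and follows essentially the same route as the paper: the degenerate set $\{\det\D^2\phi=0\}$ has null image (Sard / area formula), $T^{-1}(y)$ is finite because $T=\D\phi$ is proper, and the identity $\D c_i=-x_i$ confines coincidences of critical values to smooth curves, which are then handled by a countable cover. One slip: the phrase ``a discrete subset of a compact set is finite'' is false as stated, and you also need that $T^{-1}(y)$ is closed, hence compact and discrete. Your explicit remark that properness keeps the number of preimages locally constant near regular values fills in a step the paper leaves implicit.
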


\begin{proof}
Write
$Z\equiv\{x:\det\D^2\phi(x)=0\}$, so 
$|T(Z)|=0$ by the area formula. If $y\notin T(Z)$, then every
critical point of $\phi^y$ is non-degenerate, hence by the Inverse Function Theorem $T^{-1}(y)$
is discrete. Since $|T(x)|\to \infty$ as $|x|\to \infty$,  the set $T^{-1}(y)$ is also compact, and thus it is finite.

It remains to exclude repeated critical values. Fix
$y_0\notin T(Z)$ and write
$T^{-1}(y_0)=\{p_1,\ldots,p_N\}.$
By the Inverse Function Theorem, there is a
neighbourhood $V$ of $y_0$ and smooth maps
$x_i\colon V\to\mathbb R^2$ such that
\[
T^{-1}(y)=\{x_1(y),\ldots,x_N(y)\},
\qquad \forall y\in V.
\]
The corresponding critical values
$c_i(y)\equiv\phi^y(x_i(y))
=\phi(x_i(y))-y\cdot x_i(y)$
satisfy
\[
\D c_i(y)=-x_i(y),
\qquad
\D(c_i-c_j)(y)=x_j(y)-x_i(y)\neq0
\quad\text{if }i\neq j.
\]
Thus each set $\{c_i=c_j\}$ is either empty or a smooth hypersurface
in $V$, and so has measure zero. A countable cover of
$\mathbb R^2\setminus T(Z)$ by such neighbourhoods shows that the set
of $y$ for which two critical values of $\phi^y$ coincide has zero measure.
\end{proof}

Next, we note that the terms on the first two lines of the right-hand side in \eqref{eq:decomposition} have a simple geometric interpretation:

\begin{lemma}
\label{lemma:degree-count}
Let $\phi,\phi^y$ be as in Lemma~\ref{lemma:genericity} and $n_0(y)$ be the number of local minima of $\phi^y$.
Then
\begin{equation}
\label{eq:degree-count}
\int_{\mathbb R^2}
\bigl[\detpp(\D^2\phi(x))-\det A\bigr]\d x
=\int_{\mathbb R^2}\bigl(n_0(y)-1\bigr)\d y.
\end{equation}
\end{lemma}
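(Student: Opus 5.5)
We will prove \eqref{eq:degree-count} by applying the area formula to $T=\D\phi$ restricted to the set where its Jacobian is positive definite. Critical points of $\phi^y$ are exactly the points of $T^{-1}(y)$. By Lemma~\ref{lemma:genericity}, for a.e.\ $y$ they are all non-degenerate, and such a critical point $x$ is a local minimum if and only if $\D^2\phi(x)>0$. So for a.e.\ $y$,
\[
n_0(y)=\#\bigl(T^{-1}(y)\cap P\bigr),\qquad P\equiv\{x:\D^2\phi(x)>0\}.
\]
Since $\det \D T=\det\D^2\phi>0$ on $P$, the area formula gives
\[
\int_{\mathbb R^2}\detpp(\D^2\phi)\,\d x=\int_P\det\D^2\phi\,\d x=\int_{\mathbb R^2}n_0(y)\,\d y .
\]
Both sides may be infinite, so we have to subtract the constants carefully.

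To subtract, we localize to a large ball. Let $K$ be a compact set outside of which $\phi=\frac12\langle Ax,x\rangle$, and fix a ball $B_R\supset K$. Outside $B_R$ we have $T(x)=Ax$ and $\D^2\phi=A>0$, so every point there lies in $P$. The left-hand side of \eqref{eq:degree-count} therefore equals
\[
\int_{B_R}\bigl[\detpp(\D^2\phi)-\det A\bigr]\d x,
\]
which is finite. We then split $n_0(y)=n_0^{\mathrm{in}}(y)+n_0^{\mathrm{out}}(y)$, counting preimages in $P\cap B_R$ and in $\mathbb R^2\setminus B_R$ respectively. The area formula on $B_R$ gives
\[
\int n_0^{\mathrm{in}}(y)\,\d y=\int_{B_R}\detpp(\D^2\phi)\,\d x .
\]
Since $T$ is the linear map $A$ outside $B_R$, we have $n_0^{\mathrm{out}}(y)=1_{A(\mathbb R^2\setminus B_R)}(y)$. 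Hence
\[
\int \bigl(1-n_0^{\mathrm{out}}(y)\bigr)\,\d y=|A(B_R)|=\det A\,|B_R|.
\]
Subtracting this identity from the previous one yields \eqref{eq:degree-count}. Along the way we also see that $n_0-1$ is integrable: it is supported in a bounded set, because for $|y|$ large $T^{-1}(y)$ is the single point $A^{-1}y\notin B_R$.

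There is one point to check, namely that $\int (1-n_0^{\mathrm{out}})$ really is the finite quantity $|A(B_R)|$, so that the subtraction involves no $\infty-\infty$. This holds because $A$ is injective and the function $1-1_{A(\mathbb R^2\setminus B_R)}$ is exactly $1_{A(B_R)}$.

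We expect the main (mild) obstacle to be exactly this bookkeeping of infinite quantities, together with justifying that degenerate $y$ form a null set, which Lemma~\ref{lemma:genericity} (equivalently, $|T(Z)|=0$) already handles. The identification of local minima with preimages in $P$ is immediate from non-degeneracy: a non-degenerate critical point is a local minimum if and only if its Hessian has index $0$.
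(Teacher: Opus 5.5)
Your argument is correct, but it takes a different route from the paper's.

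The paper first uses the null Lagrangian property to replace $\det A$ by $\det\D^2\phi$. The integrand then becomes $\detpp(\D^2\phi)-\det\D^2\phi$, which equals $\lvert\det\D^2\phi\rvert$ on the index-one set, $-\lvert\det\D^2\phi\rvert$ on the index-two set, and $0$ elsewhere. The area formula turns this into $\int(n_1-n_2)\,\d y$. Finally, the degree identity $1=\deg(T,B_R,y)=n_0(y)-n_1(y)+n_2(y)$ converts this into $\int(n_0-1)\,\d y$.

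You instead apply the area formula only on $\{\D^2\phi>0\}\cap B_R$, and you account for the constant $\det A$ by hand. Since $T=A$ is a bijective linear map on $\mathbb R^2\setminus B_R$ and all those points have positive definite Hessian, the outside preimages contribute exactly $1-1_{A(B_R)}$. In effect, you replace the degree computation with an explicit count of the preimages near infinity. Your handling of the infinite quantities is sound, since both $\int_{B_R}\detpp(\D^2\phi)\,\d x$ and $|A(B_R)|=\det A\,|B_R|$ are finite.

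What each route buys:
\begin{itemize}
\item Yours is more elementary: it needs no degree theory and no null Lagrangian.
\item The paper's produces the pointwise Morse-type relation $n_0-n_1+n_2=1$, which depends on $T$ only through its boundary behaviour.
\item The paper's also expresses the left-hand side directly through $\detpp-\det$. This is the combination that appears in Lemma~\ref{lemma:decomposition} and is reused in the proof of Theorem~\ref{thm:main}.
\end{itemize}
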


\begin{proof}
For $j=0,1,2$, set
\[
E_j\equiv\{x:\det\D^2\phi(x)\neq0,
                 \ \D^2\phi(x)\text{ has index }j\}
\]
and let
$n_j(y)\equiv\#(T^{-1}(y)\cap E_j)$. On $E_1$ the quantity
$\detpp(\D^2\phi)-\det\D^2\phi$ equals
$\lvert\det\D^2\phi\rvert$, while on $E_2$ it equals
$-\lvert\det\D^2\phi\rvert$, and it vanishes elsewhere.
Since the determinant is a null Lagrangian and
$\phi-\frac12\langle A\,\cdot,\cdot\rangle$ is compactly supported,
the area formula therefore gives
\begin{equation}
\label{eq:degree-area-count}
\begin{aligned}
\int_{\mathbb R^2}
 \bigl[\detpp(\D^2\phi)-\det A\bigr]\d x & 
 = \int_{\mathbb R^2}
 \bigl[\detpp(\D^2\phi)-\det\D^2\phi\bigr]\d x\\
& =\int_{E_1}\lvert\det\D^2\phi\rvert\, \d x
-\int_{E_2}\lvert\det\D^2\phi\rvert\, \d x\\
& =\int_{\mathbb R^2}\left(n_1(y)-n_2(y)\right)\d y.
\end{aligned}
\end{equation}

Fix  $y\not \in T(Z)$, and choose $R>0$ so large that
$T^{-1}(y)\subset B_R$, $T(x)=Ax$ for $x\in \partial B_R$, and
$y\in A(B_R)$. Since the topological degree depends only on the boundary values, see \cite{Fonseca1995}, and
$\det A>0$,
we have
$\deg(T,B_R,y)=1,$
and therefore
\begin{align*}
1 =\deg(T,B_R,y) & =\sum_{x\in T^{-1}(y) \cap B_R} \operatorname{sign}\left(\det\D^2\phi(x)\right)\\
& =  \sum_{x\in T^{-1}(y)}\operatorname{sign}\left(\det\D^2\phi(x)\right)
  =n_0(y)-n_1(y)+n_2(y)
\end{align*}
for almost every $y$. Substituting this identity into
\eqref{eq:degree-area-count} proves
\eqref{eq:degree-count}.
\end{proof}

The next lemma, in turn, relates the number of separating saddle points with $\detpm$:

\begin{lemma}
\label{lemma:area}
Let $\phi,\phi^y$ be as in Lemma~\ref{lemma:genericity} and let
 $s(y)$ denote the number of separating saddle points of $\phi^y$.
Then $M$ is a measurable set with $M\subset\{\det\D^2\phi<0\}$, and
\begin{equation}
\label{eq:detpm=s}
\int_M\detpm(\D^2\phi(x))\,\d x
=\int_{\mathbb R^2}s(y)\,\d y.
\end{equation}
\end{lemma}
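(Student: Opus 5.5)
The plan is to apply the area formula to $T=\D\phi$ restricted to $M$, after checking three things: that $M$ lies in the region where $T$ is a local diffeomorphism, that $M$ is measurable, and that $T$ maps $M$ onto the set of separating saddle points in a fiberwise-exact way.

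First, the inclusion $M\subset\{\det\D^2\phi<0\}$ is immediate from the definitions. Note that $\D\phi^{T(x)}(x)=T(x)-T(x)=0$, so $x$ is always a critical point of $\phi^{T(x)}$. Moreover $\D^2\phi^{T(x)}=\D^2\phi$. Definition~\ref{definition:separating-saddle} requires $x$ to be non-degenerate of index $1$. In $\R^2$ this means $\D^2\phi(x)$ has one negative and one positive eigenvalue, i.e.\ $\det\D^2\phi(x)<0$. In particular $\detpm(\D^2\phi)=|\det\D^2\phi|$ on $M$.

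Next comes measurability, which is where I expect the main work. The set $U\equiv\{\det\D^2\phi<0\}$ is open, and near any $x_0\in U$ the map $T$ is a diffeomorphism onto a neighbourhood $V$ of $y_0=T(x_0)$. I would aim to show that $M\cap U$ differs from a Borel set by a null set, using the genericity of Lemma~\ref{lemma:genericity}.

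Fix $y_0\in V_0\equiv\R^2\setminus T(Z)$ such that $\phi^{y_0}$ has distinct critical values. Let the local inverse branches $x_i(y)$ be as in the proof of Lemma~\ref{lemma:genericity}. For $y$ near $y_0$, the function $\phi^y$ is Morse, has finitely many critical points depending continuously on $y$, and has distinct critical values. By Lemma~\ref{lemma:separating-jump}, whether $x_i(y)$ is a separating saddle is read off from the jump of $k_{\phi^y}$ across $c_i(y)$. I expect this jump to be locally constant in $y$, since the sublevel set topology is stable under small $C^1$ perturbations when the critical levels stay separated. One route is to argue via Corollary~\ref{corollary:morse-count} and stability of the Morse decomposition; another is to use the gradient-flow deformations from the proof of Lemma~\ref{lemma:separating-jump}, which are stable under perturbation.

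Granting this local constancy, the set $\{y: x_i(y)\text{ is a separating saddle of }\phi^y\}$ is open on the full-measure open set $G$ of good $y$. Hence $M\cap T^{-1}(G)$ is open. The complement $T^{-1}(\R^2\setminus G)\cap U$ is null, because $T$ is locally bi-Lipschitz on $U$ and $\R^2\setminus G$ is null. So $M$ is measurable, being open up to a null set in $U$; Lebesgue measurability suffices here. If local constancy turns out to be delicate, I would instead write $M\cap U$ via countable operations on connectivity conditions for sublevel sets, but I would try the stability argument first.

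Finally, I apply the area formula to $T$ on $M$:
\[
\int_M|\det\D^2\phi|\,\d x=\int_{\R^2}\#\bigl(T^{-1}(y)\cap M\bigr)\,\d y .
\]
For $y\notin T(Z)$, a point $x\in T^{-1}(y)$ lies in $M$ exactly when $x$ is a separating saddle of $\phi^{T(x)}=\phi^y$. Conversely, every separating saddle of $\phi^y$ is a critical point, hence lies in $T^{-1}(y)$. Therefore $\#(T^{-1}(y)\cap M)=s(y)$ for a.e.\ $y$, which gives \eqref{eq:detpm=s}.
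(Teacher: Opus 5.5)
Your proof of the inclusion $M\subset\{\det\D^2\phi<0\}$ is correct and matches the paper, and so is your closing area-formula count $\#(T^{-1}(y)\cap M)=s(y)$. The gap is measurability. You rightly identify this as the main work, but then leave it as ``granting this local constancy''. The appeal to stability of sublevel-set topology under small $C^1$ perturbations is a heuristic, not an argument. Something genuinely has to be proved here: being a separating saddle is not an open condition on $x\in\{\det\D^2\phi<0\}$ in general. A second saddle at the same level, tilted slightly below it, reconnects the two local components, so $M$ itself need not be open. As written, the measurability part of the statement is not established.

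The missing idea is the one-sided stability that the paper proves directly, with no genericity at all: the set $N$ of non-separating saddles is relatively open in $\Sigma\equiv\{\det\D^2\phi<0\}$. Set $\psi_x(z)\equiv\phi(z)-\phi(x)-\D\phi(x)\cdot(z-x)$ and take $x_0\in N$. Join $p_\pm=x_0\pm\frac{\e}{2}\xi$, with $\xi$ the negative eigenvector of $\D^2\phi(x_0)$, by a path $\gamma\subset\{\psi_{x_0}<0\}$. Compactness gives $\max_\gamma\psi_{x_0}<0$, so $\gamma\subset\{\psi_x<0\}$ for $x$ near $x_0$. Meanwhile, by Taylor's theorem, a short segment through $x$ in the positive eigendirection lies in $\{\psi_x\geq0\}$ and separates $p_+$ from $p_-$ inside $B_\e(x)$. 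Hence $M=\Sigma\setminus N$ is relatively closed in an open set, and so Borel.

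If you want to keep your route, this openness together with Corollary~\ref{corollary:morse-count} does yield your local constancy. Near a good $y_0$ the indices of the branches $x_i(y)$ are constant and non-separating saddles stay non-separating. Then $s(y)=n_0(y)-1=n_0(y_0)-1=s(y_0)$ forbids separating ones from becoming non-separating. But once $N$ is known to be open, the detour through the generic set is unnecessary.
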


\begin{proof}
To see that $M$ is measurable, set
$\Sigma\equiv\{x:\det \D^2\phi(x)<0\}.$
The set $\Sigma$ is open, and the set $N\equiv\Sigma\setminus M$ of
non-separating saddle points is open in $\Sigma$. Indeed, fix
$x_0\in N$ and let
\[
\psi_x(z)\equiv
\phi(z)-\phi(x)-\D\phi(x)\cdot(z-x) = \phi^{T(x)}(z)-\phi^{T(x)}(x),
\]
so that $\psi_x(x)=0$, $\D\psi_x(x)=0$, and $\psi_x$ differs from $\phi^{T(x)}$ by an additive constant.  Since $x_0$ is a
non-separating saddle point, let $\xi,\xi^\bot \in \mb S^1$ be respectively the negative and positive eigenvectors of $\D^2 \phi(x_0)$, so that the points $p_\pm = x_0 \pm \frac \e 2 \xi$ are in the two components
of $\{\psi_{x_0}<0\}\cap B_\e(x_0)$, for $\e>0$ small, and let also 
 $\gamma\subset\{\psi_{x_0}<0\}$ be a path joining them. Since $\gamma$ is
compact, $\max_\gamma\psi_{x_0}<0$. As
$\psi_x\to\psi_{x_0}$ in $C^\infty_\loc$ when $x\to x_0$, the same
path lies in $\{\psi_x<0\}$ for all $x$ sufficiently close to $x_0$,
while at the same time $p_+,p_-$ remain in the two distinct components of
$\{\psi_x<0\}\cap B_\e(x)$: indeed, by Taylor's Theorem, and shrinking $\e$ if needed, the line $[x -\e \xi^\bot,x+\e \xi^\bot]\subset \{\psi_x\geq 0\}$ separates $p_+$ and $p_-$. Thus $x$ is still a non-separating saddle point, so
$N$ is open in $\Sigma$ and hence $M=\Sigma\setminus N$ is
measurable.

For every $y\in\mathbb R^2$, the definition of $M$ gives
\[
\#\bigl(T^{-1}(y)\cap M\bigr)=s(y).
\]
Since clearly $M\subset\{\det\D^2\phi<0\}$, the area formula then yields
\[
\int_M\detpm(\D^2\phi(x))\,\d x
=\int_M\lvert\det \D T(x)\rvert\,\d x
=\int_{\mathbb R^2}s(y)\,\d y,
\]
as wished.
\end{proof}

\begin{proof}[Proof of Proposition \ref{proposition:area}]
By Lemma~\ref{lemma:genericity}, for almost every $y$ the function
$\phi^y$ is Morse and satisfies condition \ref{assumption:critical-points} in Lemma \ref{lemma:separating-jump}. It also clearly satisfies condition \ref{assumption:coercive}, so
Corollary~\ref{corollary:morse-count} applies and gives
$s(y)=n_0(y)-1$. The conclusion follows by
combining \eqref{eq:degree-count} and \eqref{eq:detpm=s}.
\end{proof}

\section{Proof of Theorem~\ref{thm:main}}

To prove Theorem \ref{thm:main}, we just require the following planar topology argument:

\begin{lemma}
\label{lemma:planar-separation}
Let $\psi_1,\psi_2\in C^2(\mathbb R^2)$ and suppose that $x$ is a non-degenerate
saddle point of both functions, with
$\psi_1(x)=\psi_2(x)=0.$
If
\begin{equation}
\label{eq:disjoint}
\{\psi_1<0\}\cap \{\psi_2<0\}=\emptyset,
\end{equation}
then $x$ is a separating saddle point of at least one of $\psi_1$ and $\psi_2$.
\end{lemma}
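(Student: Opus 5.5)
We argue by contradiction: assume $x$ is a non-separating saddle point of both $\psi_1$ and $\psi_2$, and build a Jordan curve that forces the two sublevel sets to meet. The planar picture is as follows. By the Morse lemma (Lemma~\ref{lemma:morse}), near $x$ each set $\{\psi_i<0\}$ is two opposite sectors, bounded by two $C^1$ curves through $x$ that cross transversally. These are the branches of $\{\psi_i=0\}$, tangent to the null directions of $\D^2\psi_i(x)$. Since the negative sectors of $\psi_1$ and $\psi_2$ are disjoint by \eqref{eq:disjoint}, then in a small disc $B_\e(x)$ the circle $\partial B_\e(x)$ meets $\{\psi_1<0\}$ in two opposite arcs $I_1^\pm$ and $\{\psi_2<0\}$ in two opposite arcs $I_2^\pm$, all four disjoint. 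The key observation is that they alternate around the circle: $I_1^+, I_2^+, I_1^-, I_2^-$. Indeed, each pair is antipodal up to small error (the sectors are cones in Morse coordinates, and those are $C^1$-close to linear ones). So between $I_1^+$ and $I_1^-$, on either side, sits exactly one arc of $\psi_2$.

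Now suppose $x$ is non-separating for $\psi_1$. Then the two local components $\mathcal N^\pm$ of $\{\psi_1<0\}\cap B_\e(x)$ lie in the same component of $\{\psi_1<0\}$. Choose points $p_\pm\in\mathcal N^\pm$ near $x$ on the negative eigendirection, and join them by a path $\sigma\subset\{\psi_1<0\}$. Close $\sigma$ up with the segment $[p_-,p_+]$ through $x$; this segment lies in $\{\psi_1<0\}\cup\{x\}$ by the local model. After a standard modification (taking a simple subarc of $\sigma$ and rerouting inside $B_\e(x)$), we obtain a Jordan curve $J\subset\{\psi_1<0\}\cup\{x\}$ that passes through $x$ and crosses $B_\e(x)$ exactly along that segment. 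The technical points here are making $\sigma$ simple and making it avoid $B_{\e/2}(x)$ except at its endpoints; both can be handled by choosing $\sigma$ to be polygonal.

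Next apply the Jordan Curve Theorem to $J$. Near $x$, $J$ is a transverse arc through $x$ along the negative direction of $\psi_1$. The two sectors of $\{\psi_2<0\}$ lie on opposite sides of this arc, by the alternation observed above. Hence one sector lies in the interior of $J$ and the other in the exterior. If $x$ were also non-separating for $\psi_2$, there would be a path in $\{\psi_2<0\}$ joining these two sectors. That path would have to cross $J$, yet it cannot meet $x$ (where $\psi_2=0$) or $J\setminus\{x\}\subset\{\psi_1<0\}$ by \eqref{eq:disjoint}. This is a contradiction, so $x$ is separating for $\psi_2$.

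The main obstacle is the local geometry near $x$: establishing the alternation of the four arcs, and showing that the two $\psi_2$ sectors lie on genuinely different sides of $J$. The latter needs $J$ to meet a small disc only along a single transverse arc. I would first reduce to a small ball in which both Morse charts are defined. There, the disjointness \eqref{eq:disjoint}, applied to the cones of $\D^2\psi_1(x)$ and $\D^2\psi_2(x)$, shows the closed negative cones meet only at $0$. A simple connectivity argument on the circle then gives the alternation. The same argument shows $J\cap B_\e(x)$ can be taken to be the segment along the negative eigendirection of $\psi_1$, and this segment separates $B_\e(x)$ into two half-discs, each containing one $\psi_2$ sector.
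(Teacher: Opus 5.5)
Your proposal is correct and follows essentially the same route as the paper. You use non-separation for $\psi_1$ to build a Jordan curve in $\{\psi_1<0\}\cup\{x\}$ that is straight along the negative direction near $x$, you place the two local components of $\{\psi_2<0\}$ in opposite sectors of $\{\psi_1\geq0\}$, hence on opposite sides of the curve, and you conclude with the Jordan curve theorem and \eqref{eq:disjoint}. One small correction: \eqref{eq:disjoint} only makes the \emph{open} negative cones of $\D^2\psi_1(x)$ and $\D^2\psi_2(x)$ disjoint, and the closed cones can share null rays (e.g.\ for $\psi_2=-\psi_1$); open-cone disjointness is all your alternation argument needs, just as the paper only uses one vector $v$ with $\pm tv\in\{\psi_2<0\}$.
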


\begin{proof}
Suppose that $x$ is not a separating saddle point of $\psi_1$. 
After translating, we may assume that $x=0$. By the Morse lemma, there
are local coordinates $\xi=(\xi_1,\xi_2)$ around $0$ such that
\[
    \psi_1(\xi)=-\xi_1^2+\xi_2^2.
\]
Thus the two components of $\{\psi_1<0\}\cap B_\e(0)$, for $\e>0$ small, are
$\{\xi_1>|\xi_2|\}$ and $\{\xi_1<-|\xi_2|\}$. By assumption these sets belong to the
same connected component of  $\{\psi_1<0\}$, so one can find a Jordan curve $\gamma\subset \{\psi_1<0\}\cup \{0\}$
with $0\in \gamma$ and which coincides with the $\xi_1$-axis in a
neighbourhood of $0$.
By \eqref{eq:disjoint},
\[
\{\psi_2<0\}\cap B_\e(0)
\subseteq\{\psi_1\geq0\}\cap B_\e(0)
=\{|\xi_2|\geq|\xi_1|\}\cap B_\e(0).
\]
We can write $\psi_2(\xi) = \langle H\xi, \xi \rangle + o(|\xi|^2)$ as $\xi\to 0$ for some indefinite, invertible symmetric matrix $H$, so we can find $v\in \mb S^1$ such that $\langle Hv,v\rangle<0$ and hence, for $t$ small enough, $\pm tv \in \{\psi_2<0\}\cap B_\e(0)$. 
It follows that the two connected components of
$\{\psi_2<0\}\cap B_\e(0)$ lie in different connected components of
$\{|\xi_2|\geq|\xi_1|\}\setminus\{0\},$
and hence lie on opposite sides of $\gamma$.
Moreover, \eqref{eq:disjoint} implies that
$\gamma\cap\{\psi_2<0\}=\emptyset$. By the Jordan curve theorem, the two components of
$\{\psi_2<0\}\cap B_\e(0)$ belong to different  components of $\{\psi_2<0\}$, thus $0$ is a separating saddle point of $\psi_2$.
\end{proof}

\begin{proof}[Proof of Theorem~\ref{thm:main}]
Fix $u\in C_c^\infty(\mathbb R^2)$.
For $B=(z,H(z))\in\Gamma$,
\[
-\det(A-B)=|a_--H(z)|^2-|a_+-z|^2.
\]
Since $\Gamma$ is compact, the supremum in \eqref{eq:L-Gamma} is
attained. We distinguish two cases.

\noindent\textbf{Case 1.} Suppose that either $a_+\notin\Omega$, or
$a_+\in\Omega$ and
$\sup_{B\in\Gamma}\bigl(-\det(A-B)\bigr)\geq0$, thus
$$L_\Gamma(A) = \det A +\sup_{B\in \Gamma} \bigl(-\det(A-B)\bigr).$$
We choose $B_0\in\Gamma$ attaining the supremum and write
$\ell_B(X)\equiv\det X-\det(X-B)$.   Then, from the definition of $L_\Gamma$, we have the lower bound
\begin{equation}
\label{eq:affine-minorant}
L_\Gamma(X)\geq\ell_B(X)
\qquad\forall X\in\mathbb R^{2\times2},\forall B\in\Gamma,
\end{equation}
with equality at $B=B_0$ and $X=A$.
Using \eqref{eq:affine-minorant} and the fact that $\ell_{B_0}$ is
affine while $u$ is compactly supported,
\[
\begin{aligned}
\int_{\mathbb R^2}
\bigl[L_\Gamma(A+\D^2u)-L_\Gamma(A)\bigr]\d x
\geq\int_{\mathbb R^2}
\bigl[\ell_{B_0}(A+\D^2u)-\ell_{B_0}(A)\bigr]\d x=0.
\end{aligned}
\]

\medskip
\noindent\textbf{Case 2.} Suppose that the conditions of Case 1 do not hold, i.e.\ that $a_+\in \Omega$ and 
\begin{equation}
\label{eq:strictineq}
|a_--H(z)|<|a_+-z|
\quad\forall z\in\gamma.
\end{equation}
Let $(\alpha,\beta)$ 
be the component of
$\Omega\cap(a_++\mathbb R)$ containing $a_+$, ordered so that
$\beta-\alpha>0$, and let 
$P, Q$ be as in Lemma \ref{lemma:decomposition}, i.e.\
\[
P=(\alpha,H(\alpha)),
\qquad
Q=(\beta,H(\beta)).
\]
By \eqref{eq:strictineq}, $A-P>0$ and $Q-A>0$. Define
\[
\phi_P(x)\equiv\frac12\langle(A-P)x,x\rangle+u(x),
\qquad
\phi_Q(x)\equiv\frac12\langle(Q-A)x,x\rangle-u(x),
\]
and let $M_P,M_Q$ be the corresponding sets given by
Lemma~\ref{lemma:area}.

By the assumptions of this case, $L_\Gamma(A)=\det A$.
We apply Lemma~\ref{lemma:decomposition} with $A+\D^2u(x)$ in place of
$A$. Since
$A+\D^2u-P=\D^2\phi_P,$
$Q-A-\D^2u=\D^2\phi_Q,$
and the determinant is a null Lagrangian,  we have
\begin{equation}
\label{eq:key-reduction}
\begin{aligned}
&\int_{\mathbb R^2}
  \bigl[L_\Gamma(A+\D^2u)-L_\Gamma(A)\bigr]\d x\\
&\quad\geq
\int_{\mathbb R^2}
\bigl[\detpp(\D^2\phi_P)-\det(\D^2\phi_P)\bigr]\d x
+
\int_{\mathbb R^2}
\bigl[\detpp(\D^2\phi_Q)-\det(\D^2\phi_Q)\bigr]\d x\\
&\qquad-
\int_{\mathbb R^2}
\min\bigl\{\detpm(\D^2\phi_P),
             \detpm(\D^2\phi_Q)\bigr\}\d x.
\end{aligned}
\end{equation}
Using again the null Lagrangian property, combined with
Proposition~\ref{proposition:area}, gives
\begin{equation*}
\label{eq:two-area-identities}
\begin{aligned}
\int_{\mathbb R^2}
\left[\detpp(\D^2\phi_P)-\det(\D^2\phi_P)\right]\d x
&=\int_{M_P}\detpm(\D^2\phi_P)\d x,\\
\int_{\mathbb R^2}
\left[\detpp(\D^2\phi_Q)-\det(\D^2\phi_Q)\right]\d x
&=\int_{M_Q}\detpm(\D^2\phi_Q)\d x.
\end{aligned}
\end{equation*}
Consequently, \eqref{eq:key-reduction} becomes
\begin{equation}
\label{eq:reduction}
\begin{aligned}
\int_{\mathbb R^2}
  \bigl[L_\Gamma(A+\D^2u)-L_\Gamma(A)\bigr]\d x
& \geq 
\int_{\mathbb R^2}\Bigl[
1_{M_P}\detpm(\D^2\phi_P)
+1_{M_Q}\detpm(\D^2\phi_Q)
\\
& \qquad -\min\bigl\{\detpm(\D^2\phi_P),
              \detpm(\D^2\phi_Q)\bigr\}\Bigr]\d x.
\end{aligned}
\end{equation}

Thus the claim will follow from the pointwise inequality
\begin{equation}
\label{eq:pointwise-general}
\min\bigl\{\detpm(\D^2\phi_P),\detpm(\D^2\phi_Q)\bigr\}
\leq
1_{M_P}\detpm(\D^2\phi_P)+1_{M_Q}\detpm(\D^2\phi_Q)
\end{equation}
which holds pointwise in $\R^2$. It suffices to prove
\eqref{eq:pointwise-general} in the set
\[
\mathcal O\equiv
\bigl\{x:\detpm(\D^2\phi_P(x))>0,
          \ \detpm(\D^2\phi_Q(x))>0\bigr\}.
\]
Fix $x\in\mathcal O$ and define
\[
\begin{aligned}
\psi_P(z)&\equiv\phi_P(z)-\phi_P(x)-\D\phi_P(x)\cdot(z-x),\\
\psi_Q(z)&\equiv\phi_Q(z)-\phi_Q(x)-\D\phi_Q(x)\cdot(z-x),
\end{aligned}
\]
which differ from the functions $\phi_P^{\D \phi_P(x)}$ and $\phi_Q^{\D \phi_Q(x)}$, defined through \eqref{eq:defphiy}, by a constant.
Now $x$ is a non-degenerate saddle point of both $\psi_P, \psi_Q$. Note that 
$\phi_P(z)+\phi_Q(z)
=
\frac12\langle(Q-P)z,z\rangle$.
Thus the inequality $Q-P=(Q-A)+(A-P)\geq0$ gives
\begin{align*}
\psi_P(z)+\psi_Q(z)& = 
(\phi_P+\phi_Q)(z)-(\phi_P+\phi_Q)(x)-\langle \D(\phi_P+\phi_Q)(x),z-x\rangle\\
& = 
\frac12\langle (Q-P)z,z\rangle
-\frac12\langle (Q-P)x,x\rangle
-\langle (Q-P)x,z-x\rangle
\\
& =\frac12\langle(Q-P)(z-x),z-x\rangle\geq0.
\end{align*}
It follows that \eqref{eq:disjoint} holds for $\psi_P,\psi_Q$, so by Lemma~\ref{lemma:planar-separation} we conclude that $x$ is a separating
saddle point of at least one of $\psi_P$ and $\psi_Q$. Thus
$x\in M_P\cup M_Q$ and \eqref{eq:pointwise-general} holds on $\mc O$.
\end{proof}

\section{Proofs of the corollaries}

\begin{proof}[Proof of Corollary~\ref{cor:beurling}]
For $A=(a_+,a_-)$ and $\lambda>0$, the definition of $L$ gives
\begin{equation}
\label{eq:weak-obstacle}
L(A/\lambda)
\leq \frac{2}{\lambda}|a_+|-1_{\{|a_-|>\lambda\}}.
\end{equation}
Indeed, if $|A|\leq\lambda$, the indicator vanishes and
$|a_+|^2-|a_-|^2= |A|(|a_+|-|a_-|)\leq \lambda|a_+|$ so \eqref{eq:weak-obstacle} follows, while if $|A|>\lambda$ then \eqref{eq:weak-obstacle} follows immediately from the second branch in \eqref{eq:L}.

By density it suffices to take $h\in C_c^\infty(\mathbb C;\mathbb R)$, and we then define
\[
u(x)\equiv\frac1\pi\int_{\mathbb C}\log|x-y|h(y)\d y.
\]
Then $\Delta u=2h$ and
\[
u(x)=O(\log|x|),
\qquad
\D u(x)=O(|x|^{-1}),
\qquad
\D^2u(x)=O(|x|^{-2}).
\]
Let $\eta_R=1$ on $B_R$ and $\eta_R=0$ outside $B_{2R}$, with
$|\D^j\eta_R|\leq CR^{-j}$ for $j=1,2$, thus
\begin{equation}
\label{eq:cutoffdecay}
\D^2(\eta_Ru)=O\left(\frac{1+\log R}{R^2}\right).
\end{equation}
For fixed $\lambda>0$, both $\D^2 u$ and $\D^2(\eta_R u)$ lie in the quadratic branch of
$L$ respectively on $B_{2R}\setminus B_R$ and $\R^2\setminus B_R$, for $R$ large enough. Consequently we have
\[
\int_{B_{2R}\setminus B_R}
\left|L\bigl(\D^2(\eta_Ru)/\lambda\bigr)\right|\d x
=O\left(\frac{(1+\log R)^2}{R^2}\right),
\qquad
\int_{\mathbb R^2\setminus B_R}
\left|L(\D^2u/\lambda)\right|\d x=O(R^{-2}),
\]
and, using Theorem \ref{thm:L}, we infer that
\begin{equation}
\label{eq:cutoffconcl}
0 \leq \int_{\mathbb R^2}L\left(\D^2(\eta_Ru)/\lambda\right)\d x
\to\int_{\mathbb R^2}L(\D^2u/\lambda)\,\d x.
\end{equation}
Note that $(\D^2 u)_-=\frac12 (u_{xx}-u_{yy}) + i u_{xy},$ hence $2u_{zz}=\overline{(\D^2 u)_-}$ and
$$\mc S h = \mc S(\tfrac 1 2 \Delta u) =2 \mc S( u_{z \bar z}) = 2 u_{zz} = \overline{(\D^2 u)_-}
\quad \implies \quad
|\mc S h| = |(\D^2 u)_-|.
$$
Since also $(\D^2 u)_+ = 2 u_{z \bar z} = h$,
combining
\eqref{eq:weak-obstacle}--\eqref{eq:cutoffconcl}  gives
$$
0 \leq \int_{\R^2} L(\D^2 u/\lambda) \, \d x\leq \frac 2 \lambda \|h\|_{L^1}-\{|\mc S h|>\lambda\}|,
$$
and the desired estimate follows. 

To prove sharpness, set $c_n\equiv1+2/n$, and consider
the probability measure
\begin{align*}
\nu_n&\equiv
\frac{1-1/n}{2}\bigl(\delta_{\operatorname{diag}(c_n,-1)}
 +\delta_{-\operatorname{diag}(c_n,-1)}\bigr)
+\frac1{2n}\bigl(\delta_{\operatorname{diag}(c_n,n-1)}
 +\delta_{-\operatorname{diag}(c_n,n-1)}\bigr)\\
&=\frac{1-1/n}{2}
 \left(\delta_{(1/n,1+1/n)}+\delta_{-(1/n,1+1/n)}\right)
 +\frac1{2n}
 \left(\delta_{(n/2+1/n,1+1/n-n/2)}
 +\delta_{-(n/2+1/n,1+1/n-n/2)}\right),
\end{align*}
where the second line is in complex coordinates.
This is a second-order laminate with barycentre zero: we first split zero
equally into $\pm\operatorname{diag}(c_n,0)$ and then we split each matrix in
the second coordinate with weights $1-1/n$ and $1/n$, see Figure \ref{fig:laminate} as well as  \cite{Muller1999a} for further details on laminates.  Moreover, for $n\geq 5$, we have
\[
\supp\nu_n\subset\{|a_-|\geq1\},
\qquad
\int|a_+|\,\d\nu_n=\frac12+\frac1n = \frac{n + 2}{2n}	.
\]
The realization of finite-order laminates by Hessians of compactly
supported smooth potentials in \cite[Corollary~1]{Boros2013} gives
real-valued conformal parts $h_{n,k}$ such that
\[
\frac{\bigl|\{|\mathcal S h_{n,k}|>1\}\bigr|}
     {\|h_{n,k}\|_{L^1}}
\to\frac{2n}{n+2}
\qquad\text{as }k\to\infty.
\]
Letting $n\to\infty$ proves that the constant $2$ is sharp.
\end{proof}

\begin{figure}
\begin{tikzpicture}[scale=1.0,>=stealth]

\def\n{6} 

\pgfmathsetmacro{\cn}{1 + 2/\n}

\pgfmathsetmacro{\Px}{\cn/2}
\pgfmathsetmacro{\Py}{\cn/2}

\pgfmathsetmacro{\Aax}{1/\n}
\pgfmathsetmacro{\Aay}{1 + 1/\n}

\pgfmathsetmacro{\Abx}{\n/2 + 1/\n}
\pgfmathsetmacro{\Aby}{1 + 1/\n - \n/2}

\pgfmathsetmacro{\Bax}{-\Aax}
\pgfmathsetmacro{\Bay}{-\Aay}
\pgfmathsetmacro{\Bbx}{-\Abx}
\pgfmathsetmacro{\Bby}{-\Aby}

\draw[->] (-4.2,0) -- (4.2,0) node[right] {$a_+$};
\draw[->] (0,-3.2) -- (0,3.2) node[above] {$a_-$};

\draw[densely dashed,gray!50] (-3.8,-3.8) -- (3.8,3.8);
\draw[densely dashed,gray!50] (-3.8,3.8) -- (3.8,-3.8);




\fill[black] (\Aax,\Aay) circle (2.2pt)
  node[above right=1pt] {$(\tfrac1n,\,1+\tfrac1n)$};

\fill[black] (\Abx,\Aby) circle (2.2pt)
  node[below right=1pt] {$ (\tfrac n2+\tfrac1n,\,1+\tfrac1n-\tfrac n2)$};

\fill[black] (\Bax,\Bay) circle (2.2pt)
  node[below left=1pt] {$-(\tfrac1n,\,1+\tfrac1n)$};

\fill[black] (\Bbx,\Bby) circle (2.2pt)
  node[above left=1pt] {$ -(\tfrac n2+\frac1n,\,1+\frac1n-\frac n2)$};

\draw[thick,black,->] (0,0) -- (\Px,\Py)
  node[midway,above left=-1pt] {};
\draw[thick,black,->] (0,0) -- (-\Px,-\Py)
  node[midway,below right=-1pt] {};

\draw[thick,black,->] (\Px,\Py) -- (\Aax,\Aay)
  node[midway,above left=-1pt] {};
\draw[thick,black,->] (\Px,\Py) -- (\Abx,\Aby)
  node[midway,below left=-1pt] {};

\draw[thick,black,->] (-\Px,-\Py) -- (\Bax,\Bay)
  node[midway,below right=-1pt] {};
\draw[thick,black,->] (-\Px,-\Py) -- (\Bbx,\Bby)
  node[midway,above right=-1pt] {};

\end{tikzpicture}[]
\caption{The second order laminate $\nu_n$. The dashed axes is $\{\det=0\}.$}
\label{fig:laminate}
\end{figure}

\begin{proof}[Proof of Corollary~\ref{cor:burkholder}]
Set
$
M(X)\equiv L(X)-\det X.$
Clearly Theorem~\ref{thm:L} implies the same Hessian-quasiconvexity inequality for $M$. The key observation, due to \cite[(1.2a)--(1.2b)]{Baernstein1997a}, is that one can recover $B_p$ through a Mellin transform\footnote{The fact that the integral formulas in this proof are Mellin transforms was pointed out to the author by \v Sver\'ak.}:
\begin{align*}
\int_0^\infty t^{p-1}L(A/t)\d t
&=\frac{2}{p(2-p)}B_p(A),&&1<p<2,\\
\int_0^\infty t^{p-1}M(A/t)\d t
&=\frac{2}{p(p-1)(p-2)}B_p(A),&&2<p<\infty.
\end{align*}
For $t>0$, the integrands $A\mapsto L(A/t), M(A/t)$ are
Hessian-quasiconvex, and $B_p$ can be expressed  as a positive weighted combination of these integrands, hence it is Hessian-quasiconvex too.
\end{proof}

\begin{proof}[Proof of Corollary~\ref{cor:beurling-lp}]
Set
$\alpha_p\equiv p\left(1-\frac1{p^*}\right)^{p-1}.$
The pointwise estimate \cite[p.~93]{Baernstein1997a} reads as
\begin{equation}
\label{eq:burkholder-pointwise}
\alpha_p B_p(A)\leq
\begin{cases}
(p^*-1)^p|a_+|^p-|a_-|^p,&1<p<2,\\
(p^*-1)^p|a_-|^p-|a_+|^p,&2<p<\infty.
\end{cases}
\end{equation}
By density we can take $h\in C_c^\infty(\mathbb C;\mathbb R)$, and let $u$ be the
potential defined in the proof of Corollary~\ref{cor:beurling}. For the cutoffs
$\eta_R$ used in that proof, from \eqref{eq:cutoffdecay} it is easy to see that
\[
\D^2(\eta_Ru)\to\D^2u
\quad\text{in }L^p(\mathbb R^2),
\qquad 1<p<\infty.
\]
As in the proof of Corollary \ref{cor:beurling}, we have
$(\D^2u)_+=h$  and $|(\D^2u)_-|=|\mathcal S h|$. Hence, applying Corollary~\ref{cor:burkholder} at $A=0$
to $\eta_Ru$, using \eqref{eq:burkholder-pointwise}, and taking the limit
gives
\begin{align*}
\|\mathcal S h\|_{L^p}
&\leq\frac1{p-1}\|h\|_{L^p},&&1<p<2,\\
\|h\|_{L^p}
&\leq(p-1)\|\mathcal S h\|_{L^p},&&2<p<\infty.
\end{align*}
It is easy see verify that, if $\overline{\cdot}$ denotes the conjugation operator, then
$$\mc S^{-1} \circ \overline{\cdot} = \overline{\cdot} \circ \mc S,$$
as can be checked by using the rules for conjugation of Wirtinger derivatives.
Hence, for a real-valued function $g$, we have $\mc S^{-1}g=\overline{\mc S g}$. 
Thus, in the case $2<p<\infty$, if $g=\mathcal S h$ is real-valued then  we can apply 
the second inequality above to $g$ to obtain
$\|\mathcal S h\|_{L^p}\leq(p-1)\|h\|_{L^p}$, as wished.

It remains to prove sharpness. Fix $1<p<\infty$, $p\neq2$, and
$0<\alpha<1/p$, and consider the radial gradient map
\[
v_\alpha(z)\equiv
\begin{cases}
z|z|^{-2\alpha},&|z|\leq1,\\
1/\overline z,&|z|>1
\end{cases}
\]
which satisfies
\[
\begin{cases}
\partial_zv_\alpha=(1-\alpha)|z|^{-2\alpha}\\
|\partial_{\bar z}v_\alpha|
=\alpha|z|^{-2\alpha}
\end{cases}
\text{in } \mb D, 
\qquad 
\qquad
\begin{cases}
\partial_zv_\alpha=0\\
|\partial_{\bar z}v_\alpha|=|z|^{-2}
\end{cases}
\text{outside } \mb D.
\]
Consequently,
\[
\frac{\|\partial_{\bar z}v_\alpha\|_{L^p}}
     {\|\partial_zv_\alpha\|_{L^p}}
\to\frac1{p-1}
\qquad\text{as }\alpha\nearrow\frac1p.
\]
Now sharpness for $1<p<2$ follows by taking $h=\p_z v_\a$ and for $2<p<\infty$ by taking $h=\p_{\bar z} v_\a$, so that $\mc S h = \p_z v_\a$ is real-valued.
\end{proof}

{\small
\bibliographystyle{abbrv-andre}
\bibliography{../../library}
}

\end{document}